\pgfplotsset{compat=1.16}
\definecolor{mycolor1}{rgb}{0.00000,0.44700,0.74100}
\definecolor{mycolor2}{rgb}{0.8500, 0.3250, 0.0980}
\definecolor{mycolor3}{rgb}{0.9290, 0.6940, 0.1250}
\definecolor{mycolor4}{rgb}{0.4940, 0.1840, 0.5560}
\definecolor{mycolor5}{rgb}{0.4660, 0.6740, 0.1880}
\newtheorem{assumption}{Assumption}
\newtheorem{definition}{Definition}[section]
\newtheorem{proposition}{Proposition}[section]
\newtheorem{corollary}{Corollary}[section]
\newtheorem{theorem}{Theorem}[section]
\theoremstyle{remark}
\newenvironment{example}
  {\pushQED{\qed}\examplex}
  {\popQED\endexamplex}
\numberwithin{equation}{section}
\newcommand{\R}{\mathbb{R}}
\newcommand{\Z}{\mathbb{Z}}
\newcommand{\C}{\mathbb{C}}
\newcommand{\N}{\mathbb{N}}
\newcommand{\Q}{\mathbb{Q}}
\newcommand{\PP}{\mathbb{P}}
\newcommand{\A}{\mathcal{A}}
\newcommand{\B}{\mathcal{B}}
\newcommand{\CC}{\mathcal{C}}
\title{Contour Integration for Eigenvector Nonlinearities}
\author{Rob Claes, Karl Meerbergen and Simon Telen}
\date{}
\begin{document}

\maketitle

\begin{abstract}
    Solving polynomial eigenvalue problems with eigenvector nonlinearities (PEPv) is an interesting computational challenge, outside the reach of the well-developed methods for nonlinear eigenvalue problems. 
    We present a natural generalization of these methods which leads to a contour integration approach for computing all eigenvalues of a PEPv in a compact region of the complex plane. Our methods can be used to solve any suitably generic system of polynomial or rational function equations.
\end{abstract}

\section{Introduction}

We consider a matrix valued function $T : \C^n \times \C \rightarrow \C^{n \times n}, (x,z) \mapsto T(x,z)$ such that, for any fixed $z \in \C$, $T$ is given by homogeneous polynomials in $x$, and for any fixed $x$, $T$ is given by polynomials in $z$. We assume moreover that all polynomials in the $i$-th row of $T$ are of the same degree $d_i$. If any of these degrees is positive, the function $T$ defines a polynomial eigenvalue problem with eigenvector nonlinearities (PEPv), given by the equations
\begin{equation} \label{eq:PEPv}
     T(x,z) \cdot x = 0. 
\end{equation}
By homogeneity, these equations are well-defined on $\PP^{n-1} \times \C$, where $\PP^{n-1}$ is the $(n-1)$-dimensional complex projective space. Points $(x^*, z^*) \in \PP^{n-1} \times \C$ such that $T(x^*,z^*)\cdot x^* =0$ are called eigenpairs. For such an eigenpair, $z^*$ is the eigenvalue, with corresponding eigenvector $x^*$. This paper is concerned with computing all eigenpairs $(x^*, z^*)$ for which $z^*$ lies in a compact domain $\Omega \subset \C$, whose Euclidean boundary is denoted by $\partial \Omega$.

\begin{example}[$n = 3, d_1 = d_2 = d_3 = 1$] \label{ex:intro}
Consider the PEPv given by 
\[ T(x,z) \cdot x = \begin{pmatrix}
x_1 + z x_2 & z x_2+x_3 & x_1-x_3 \\
x_1+(1+z)x_2 & (1-z^2)x_2-zx_3 & x_1 +x_3\\ 
(1+z)x_1 +x_2 & x_2-x_3 & zx_1+(1-z)x_3
\end{pmatrix} \cdot \begin{pmatrix} x_1 \\ x_2 \\ x_3
\end{pmatrix} = \begin{pmatrix} 0 \\ 0 \\ 0
\end{pmatrix}.\]
For fixed $z \in \C$, the rows define three conics in the projective plane $\PP^2$. Usually, these three conics have no common intersection points. The eigenvalues $z = z^*$ are precisely those choices of $z$ for which the three conics intersect. The 12 eigenvalues are the roots of 
\[{\cal R}(z) = 4z^{12}+12z^{11}-z^{10}-53z^9-100z^8-108z^7-78z^6-23z^5+14z^4+22z^3+8z^2-4z+3,\]
depicted in Figure~\ref{fig:intro_sols}.
For instance, $z^* \approx 0.5919$ is an eigenvalue, with eigenvector $x^* \approx (1 : -1.9218 : -1.9646) \in \PP^2$. 
A possible choice for the target domain $\Omega$ to select this eigenvalue is shown in Figure~\ref{fig:intro_sols} by its boundary $\partial\Omega$.
The three conics corresponding to $z^* = 0.5919$ are shown in Figure \ref{fig:intro}.
\end{example}

\begin{figure}[tb]
    %\centering
    \begin{subfigure}[b]{.48\textwidth}
        \centering
        \begin{tikzpicture}[/tikz/background rectangle/.style={fill={rgb,1:red,1.0;green,1.0;blue,1.0}, draw opacity={1.0}},scale = 0.92]
\begin{axis}[point meta max={nan}, point meta min={nan}, legend cell align={left}, legend columns={1}, title={}, title style={at={{(0.5,1)}}, anchor={south}, font={{\fontsize{14 pt}{18.2 pt}\selectfont}}, color={rgb,1:red,0.0;green,0.0;blue,0.0}, draw opacity={1.0}, rotate={0.0}}, legend style={color={rgb,1:red,0.0;green,0.0;blue,0.0}, draw opacity={1.0}, line width={1}, solid, fill={rgb,1:red,1.0;green,1.0;blue,1.0}, fill opacity={1.0}, text opacity={1.0}, font={{\fontsize{8 pt}{10.4 pt}\selectfont}}, text={rgb,1:red,0.0;green,0.0;blue,0.0}, cells={anchor={center}}, at={(1.02, 1)}, anchor={north west}}, axis background/.style={fill={rgb,1:red,1.0;green,1.0;blue,1.0}, opacity={1.0}}, anchor={north west}, xshift={1.0mm}, yshift={-1.0mm}, width={70mm}, height={70mm}, scaled x ticks={false}, xlabel={$\Re(z)$}, x tick style={color={rgb,1:red,0.0;green,0.0;blue,0.0}, opacity={1.0}}, x tick label style={color={rgb,1:red,0.0;green,0.0;blue,0.0}, opacity={1.0}, rotate={0}}, xlabel style={at={(ticklabel cs:0.5)}, anchor=near ticklabel, at={{(ticklabel cs:0.5)}}, anchor={near ticklabel}, font={{\fontsize{11 pt}{14.3 pt}\selectfont}}, color={rgb,1:red,0.0;green,0.0;blue,0.0}, draw opacity={1.0}, rotate={0.0}},  xmin={-1.6801588660066602}, xmax={2.5029040585803934}, xtick={{-1.0,0.0,1.0,2.0}}, xticklabels={{$-1$,$0$,$1$,$2$}}, xtick align={inside}, xticklabel style={font={{\fontsize{8 pt}{10.4 pt}\selectfont}}, color={rgb,1:red,0.0;green,0.0;blue,0.0}, draw opacity={1.0}, rotate={0.0}}, x grid style={color={rgb,1:red,0.0;green,0.0;blue,0.0}, draw opacity={0.1}, line width={0.5}, solid}, axis x line*={left}, x axis line style={color={rgb,1:red,0.0;green,0.0;blue,0.0}, draw opacity={1.0}, line width={1}, solid}, scaled y ticks={false}, ylabel={$\Im(z)$}, y tick style={color={rgb,1:red,0.0;green,0.0;blue,0.0}, opacity={1.0}}, y tick label style={color={rgb,1:red,0.0;green,0.0;blue,0.0}, opacity={1.0}, rotate={0}}, ylabel style={at={(ticklabel cs:0.5)}, anchor=near ticklabel, at={{(ticklabel cs:0.5)}}, anchor={near ticklabel}, font={{\fontsize{11 pt}{14.3 pt}\selectfont}}, color={rgb,1:red,0.0;green,0.0;blue,0.0}, draw opacity={1.0}, rotate={0.0}}, ymin={-1.4416526225849546}, ymax={1.4416526225849546}, ytick={{-1.0,0.0,1.0}}, yticklabels={{$-1$,$0$,$1$}}, ytick align={inside}, yticklabel style={font={{\fontsize{8 pt}{10.4 pt}\selectfont}}, color={rgb,1:red,0.0;green,0.0;blue,0.0}, draw opacity={1.0}, rotate={0.0}}, y grid style={color={rgb,1:red,0.0;green,0.0;blue,0.0}, draw opacity={0.1}, line width={0.5}, solid}, axis y line*={left}, y axis line style={color={rgb,1:red,0.0;green,0.0;blue,0.0}, draw opacity={1.0}, line width={1}, solid}, colorbar={false}]
    \addplot[color={rgb,1:red,0.0;green,0.6056;blue,0.9787},  only marks, draw opacity={1.0}, line width={0}, solid, mark={*}, mark size={1.50 pt}, mark repeat={1}, mark options={color=mycolor1, draw opacity={1.0}, fill=mycolor1, fill opacity={1.0}, line width={0.75}, rotate={0}, solid}]
        table[row sep={\\}]
        {
            \\
            -0.5166001009052071  -0.828656618389808  \\
            -1.1538313760548315  -0.2182072653957205  \\
            -0.5166001009052065  0.8286566183898081  \\
            0.5919305292430951  -6.130680781421075e-46  \\
            0.03266379320803539  0.9631352535118162  \\
            -1.5617702926692907  -0.7683088286685945  \\
            0.2113149691782338  0.314616665230113  \\
            0.03266379320803556  -0.9631352535118162  \\
            -1.56177029266929  0.7683088286685948  \\
            -1.1538313760548318  0.21820726539572027  \\
            0.21131496917823395  -0.31461666523011306  \\
            2.384515485243024  -1.1479437019748901e-41  \\
        }
        ;
        \label{marker:intro_eigenvalues}
    \addplot[color=mycolor2, draw opacity={1.0}, line width={1}, solid]
        table[row sep={\\}]
        {
            \\
            1.0  0.0  \\
            0.9967160055292985  0.038363148505351795  \\
            0.9869179452156118  0.0760963751728522  \\
            0.9707667029384087  0.11258010146381223  \\
            0.9485274816493557  0.14721526560118134  \\
            0.9205654487471826  0.1794331591473648  \\
            0.887339740039091  0.20870476518104591  \\
            0.8493959207434935  0.23454944474040892  \\
            0.8073570273242101  0.2565428289016038  \\
            0.7619133372489575  0.27432378690474374  \\
            0.7138110346524129  0.28760035591099814  \\
            0.6638399580133517  0.29615453502433503  \\
            0.6128206310286621  0.29984586486020637  \\
            0.5615907896369272  0.2986137338847595  \\
            0.5109916264174742  0.2924783736545471  \\
            0.461853978231477  0.2815405266149281  \\
            0.41498468390366594  0.2659797919119  \\
            0.3711533359511322  0.2460516763790868  \\
            0.33107964389547323  0.22208339912259464  \\
            0.2954216166523462  0.19446851859233652  \\
            0.2647647580432637  0.1636604703631646  \\
            0.23961245283903232  0.13016512173526745  \\
            0.22037770119573252  0.09453246540708626  \\
            0.2073763372035738  0.05734758861041163  \\
            0.20082184289986543  0.019221065994213967  \\
            0.20082184289986543  -0.019221065994213898  \\
            0.20737633720357385  -0.057347588610411684  \\
            0.2203777011957324  -0.09453246540708606  \\
            0.23961245283903232  -0.1301651217352674  \\
            0.26476475804326366  -0.16366047036316453  \\
            0.2954216166523461  -0.19446851859233646  \\
            0.3310796438954733  -0.22208339912259467  \\
            0.371153335951132  -0.24605167637908668  \\
            0.41498468390366583  -0.26597979191189997  \\
            0.46185397823147667  -0.28154052661492807  \\
            0.5109916264174741  -0.2924783736545471  \\
            0.5615907896369273  -0.2986137338847595  \\
            0.6128206310286619  -0.29984586486020637  \\
            0.6638399580133517  -0.29615453502433503  \\
            0.7138110346524128  -0.28760035591099825  \\
            0.7619133372489575  -0.27432378690474374  \\
            0.8073570273242101  -0.2565428289016038  \\
            0.8493959207434933  -0.23454944474040895  \\
            0.8873397400390908  -0.20870476518104608  \\
            0.9205654487471826  -0.17943315914736485  \\
            0.9485274816493557  -0.14721526560118134  \\
            0.9707667029384088  -0.11258010146381212  \\
            0.9869179452156117  -0.07609637517285225  \\
            0.9967160055292984  -0.03836314850535199  \\
            1.0  -7.347880794884119e-17  \\
        }
        ;
    \label{marker:intro_contour}
\end{axis}
\end{tikzpicture}
        \caption{Eigenvalues (\ref{marker:intro_eigenvalues}) and contour $\partial\Omega$ (\ref{marker:intro_contour}).}
        \label{fig:intro_sols}
    \end{subfigure}
    %\hfill
    \begin{subfigure}[b]{.48\textwidth}
        \centering
        \input{conics_ex1_1}
        \caption{Three conics corresponding to $z^*\approx0.5919$.}
        \label{fig:intro}
    \end{subfigure}
    \caption{Example \ref{ex:intro}.}
\end{figure}

Any system of polynomial equations $f_1(x,z) = \cdots = f_n(x,z) = 0$ on $\PP^{n-1} \times \C$ can be formulated as a PEPv. Rewriting this as in \eqref{eq:PEPv} and calling solutions `eigenpairs' seemingly does not change much. Our motivation is that the algorithm we propose for finding eigenpairs with $z^* \in \Omega$ is a natural generalization of standard algorithms used for eigenvalue problems with more structure. More precisely, PEPv's generalize polynomial eigenvalue problems (PEP), for which $d_i = 0$. These in turn contain generalized eigenvalue problems (GEP), for which $d_i = 0$ and $T(z) = A - z \cdot B$ is an affine-linear function. 

Polynomial eigenvalue problems often arise from an intermediate step in solving general nonlinear eigenvalue problems (NEP), in which the entries of $T(z)$ are allowed to be transcendental functions of $z$.
One typically approximates these functions by polynomials in a certain region of the complex plane, obtaining a PEP.
One way of solving PEPs is linearization \cite{effenberger2012chebyshev,van2015linearization}. The linearization step results in a GEP of larger dimension.
This dimension grows with the degree of the approximating polynomials, and is typically very large.
In order to solve it, special structure exploiting methods are used \cite{jarlebring2012linear,van2015compact}.

Another common approach for solving NEPs is based on contour integration.
The goal of methods like Beyn \cite{beyn2012integral}, SS \cite{asakura2009numerical} or NLFEAST \cite{gavin2018feast} is to locate all eigenvalues on a compact domain $\Omega$ in the complex plane.
This is done by calculating a contour integral over the boundary $\partial \Omega$ with an integrand that contains the matrix inverse of the eigenvalue problem.
Using the residue theorem, the poles of the integrand -- which coincide with the desired eigenvalues in the compact domain -- can be extracted.

In the present paper, we develop a new contour-integration-based method for finding all eigenpairs of a PEPv with $z^* \in \Omega$. It generalizes known approaches for PEPs, in the sense that when $d_i = 0$, Beyn's algorithm is recovered. We reiterate that, under suitable genericity assumptions, this can be used to find all solutions to a polynomial system $f_1(x,z) = \cdots = f_n(x,z) = 0$ with $z$-coordinate inside $\Omega$. The situation of interest is where the number of solutions with this property is much smaller than the total number of solutions, i.e., the total number of eigenvalues of $T(x,z)$. Our strategy is to integrate trace functions along the boundary $\partial \Omega$, and extract the eigenvalues from moments. These traces are evaluated using numerical homotopy continuation \cite{sommese2005numerical}. Such methods can also be used to naively compute all eigenpairs of $T(x,z)$ and then filter out relevant solutions by checking whether $z \in \Omega$. However, an important feature of our method is that evaluating the trace usually requires significantly less homotopy paths than the total number of eigenvalues of $T(x,z)$, which makes it more efficient than the naive approach.
It is important to note that the traces are not available in an explicit form as is usually expected for PEPs solved by Krylov methods.
Therefore, we only consider contour integration methods in this paper: these only require evaluation of the trace, not its explicit expression.

This paper is structured as follows.
An overview of the standard Beyn's algorithm is presented in Section~\ref{sec:2}.
The basis of our approach is laid in Section~\ref{sec:3} by introducing the concepts of resultants and traces. Section~\ref{sec:4} describes the resulting contour integration method and comments on the numerical implementation.
We discuss the complexity of our method in Section~\ref{sec:5} and present an analysis for two families of systems of equations. Our numerical experiments in Section~\ref{sec:6} confirm the presented theory.

\section{Beyn's algorithm} \label{sec:2}

The method of Beyn \cite{beyn2012integral} considers the nonlinear eigenvalue problem defined by the holomorphic matrix valued function $A:\C\rightarrow\C^{n\times n}$ as
\[ A(z) \cdot x = 0.\]
The goal is to find eigenpairs $(x^*,z^*)\in \PP^{n-1}\times\C$ for which the eigenvalue $z^*$ lies in the compact domain $\Omega$ of the complex plane. The function $A$ is typically assumed to be holomorphic in a neighborhood of $\Omega$.
Beyn's method is especially useful for targeting a specific subset of the, possibly infinite, complete set of eigenvalues.
In this section, we recapitulate the idea and theory behind contour integration for eigenvalue problems.
For reasons of clarity, we focus the derivations on simple eigenvalues only.
An eigenvalue is called simple if the algebraic multiplicity and the geometric multiplicity are equal to one, where the multiplicity of an eigenvalue is defined by the following definitions.
\begin{definition}
    The algebraic multiplicity of an eigenvalue $z^*$ is the smallest positive integer $m_a$ such that
    \begin{equation}
        \left.\frac{d^{m_a}}{dz^{m_a}}\det ( A(z) ) \right\vert_{z=z^*}\neq 0.
    \end{equation}
\end{definition}
\begin{definition}
    The geometric multiplicity of an eigenvalue $z^*$ is the dimension of the null space of $A(z^*)$.
\end{definition}

Let $z^*$ be a simple eigenvalue of $A$ with corresponding right and left eigenvectors $x^*$ and $y^*$ such that $A(z^*)\cdot x^*=0$ and $A(z^*)^H\cdot y^*=0$.
There exists a region $\mathcal{N}\subset\C$ around $z^*$ and a holomorphic function $R: \C \rightarrow \C^{n\times n}$ such that
\[ A(z)^{-1} = \frac{1}{z-z^*}x^*y^{*H} + R(z) , \quad z\in \mathcal{N}\setminus\{z^*\}. \]
This property can be easily generalized to the case where multiple simple eigenvalues are considered in a compact subset of $\C$ \cite[Thm.~2.4]{beyn2012integral}.
\begin{theorem}\label{thm:keldysh_simple}
Let $\Omega \subset\C$ be a compact subset that contains only the simple eigenvalues $z^*_i, i=1,\ldots,l$ with corresponding right and left eigenvectors $x^*_i$ and $y^*_i$.
Then there exists a neighborhood $\mathcal{N}$ of $\Omega$ and a holomorphic function $R: \C \rightarrow\C^{n \times n}$ such that
\[ A(z)^{-1} = \sum_{i=1}^l \frac{1}{z-z^*_i} \, x^*_iy^{*H}_i + R(z), \quad z\in\mathcal{N}\setminus\{z^*_1,\ldots,z^*_l\}.\]
\end{theorem}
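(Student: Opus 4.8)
The plan is to deduce the statement from the single-eigenvalue Keldysh expansion recalled just above the theorem, by localizing and patching. First I would fix the neighborhood carefully: let $U$ be the neighborhood of $\Omega$ on which $A$ is holomorphic and let $Z = \{\, z \in U : \det A(z) = 0 \,\}$ be its set of eigenvalues. Because the $z_i^*$ have finite algebraic multiplicity, $\det A$ does not vanish identically on the components of $U$ meeting $\Omega$, so $Z$ is discrete and closed in $U$, hence finite on every compact subset of $U$. Using $Z \cap \Omega = \{z_1^*,\ldots,z_l^*\}$ together with compactness of $\Omega$, I would shrink to an open $\mathcal{N}$ with $\Omega \subset \mathcal{N}$, $\overline{\mathcal{N}} \subset U$, and $\mathcal{N} \cap Z = \{z_1^*,\ldots,z_l^*\}$: the finitely many other zeros of $\det A$ near $\Omega$ lie at positive distance from the compact set $\Omega$ and are discarded. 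On $\mathcal{N} \setminus \{z_1^*,\ldots,z_l^*\}$ the matrix $A(z)$ is then invertible with $z \mapsto A(z)^{-1}$ holomorphic.

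Next I would simply define the candidate remainder
\[
R(z) \;:=\; A(z)^{-1} - \sum_{i=1}^l \frac{1}{z - z_i^*}\, x_i^* y_i^{*H}, \qquad z \in \mathcal{N} \setminus \{z_1^*,\ldots,z_l^*\},
\]
where the left and right eigenvectors are normalized exactly as in the single-eigenvalue case (so that the residue of $A(z)^{-1}$ at $z_i^*$ is $x_i^* y_i^{*H}$; e.g. $y_i^{*H} A'(z_i^*) x_i^* = 1$). This $R$ is holomorphic on $\mathcal{N} \setminus \{z_1^*,\ldots,z_l^*\}$ by construction, and the only thing left is to show that each $z_i^*$ is a removable singularity. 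Fixing $i$, the single-eigenvalue expansion at $z_i^*$ tells me that $A(z)^{-1} - \tfrac{1}{z-z_i^*} x_i^* y_i^{*H}$ coincides near $z_i^*$ with a function holomorphic at $z_i^*$, while every remaining term $\tfrac{1}{z - z_j^*} x_j^* y_j^{*H}$ with $j \neq i$ is holomorphic near $z_i^*$ since $z_j^* \neq z_i^*$. Hence $R$ agrees near $z_i^*$ with a holomorphic function, so Riemann's removable-singularity theorem extends $R$ across $z_i^*$. Carrying this out for every $i$ produces a holomorphic $R : \mathcal{N} \to \C^{n\times n}$, and the displayed identity of the theorem holds on $\mathcal{N} \setminus \{z_1^*,\ldots,z_l^*\}$ by construction.

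The only genuine obstacle is the bookkeeping around $\mathcal{N}$: one must know that after shrinking to a small enough neighborhood of the compact set $\Omega$ no zeros of $\det A$ survive except the prescribed $z_i^*$ (no accumulation on $\partial\Omega$, nothing leaking in from outside), and this is where compactness of $\Omega$ and discreteness of $Z$ do all the work. Everything else is assembling the already-available local expansion term by term and invoking the removable-singularity theorem, so no new estimate is needed. The one point to be careful about is to fix the eigenvector normalization consistently with the single-eigenvalue statement, so that the rank-one residues appearing in the sum are genuinely the residues of $A(z)^{-1}$.
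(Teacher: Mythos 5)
Your proof is correct. The paper does not actually prove this theorem; it simply cites it from Beyn \cite{beyn2012integral} (Thm.~2.4) after recalling the single-eigenvalue expansion. Your argument is the standard and expected one for deriving the global statement from the local one: use discreteness of the zero set of $\det A$ together with compactness of $\Omega$ to shrink to an $\mathcal{N}$ containing no eigenvalues other than the prescribed $z_i^*$, define $R$ as $A(z)^{-1}$ minus the sum of the rank-one singular parts, and then remove the singularity at each $z_i^*$ by comparing with the local single-eigenvalue expansion; you are also right to flag that the normalization $y_i^{*H} A'(z_i^*) x_i^* = 1$ must be fixed so that $x_i^* y_i^{*H}$ really is the residue of $A(z)^{-1}$ at $z_i^*$.
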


Theorem~\ref{thm:keldysh_simple} provides us with a way of expressing the value of a contour integral over the boundary of the compact subset $\Omega\subset\mathcal{N}$.
\begin{theorem} \label{thm:res_simple}
In the situation of Theorem \ref{thm:keldysh_simple}, we have that 
\[ \frac{1}{2\pi\sqrt{-1}}\oint_{\partial\Omega} f(z)A(z)^{-1}dz = \sum_{i=1}^l f(z_i^*)  \,x_i y_i^H.\]
\end{theorem}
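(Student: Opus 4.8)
The plan is to substitute the Keldysh-type resolvent expansion from Theorem~\ref{thm:keldysh_simple} directly into the contour integral and evaluate term by term. First I would record the standing assumptions needed for the statement to make sense: $f$ is holomorphic on the neighborhood $\mathcal{N}$ of $\Omega$ (so that $f(z)A(z)^{-1}$ is meromorphic on $\mathcal{N}$ with poles only at the $z_i^*$), and $\partial\Omega$ is a positively oriented, rectifiable closed curve contained in $\mathcal{N}\setminus\{z_1^*,\ldots,z_l^*\}$, enclosing exactly $z_1^*,\ldots,z_l^*$. Under these hypotheses the integrand in the statement is continuous on $\partial\Omega$, so the integral is well defined.

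The main step is the substitution
\[
\frac{1}{2\pi\sqrt{-1}}\oint_{\partial\Omega} f(z)A(z)^{-1}\,dz
= \frac{1}{2\pi\sqrt{-1}}\oint_{\partial\Omega} f(z)\left( \sum_{i=1}^l \frac{1}{z-z_i^*}\,x_i^* y_i^{*H} + R(z) \right)dz,
\]
which is valid because the equality $A(z)^{-1} = \sum_{i} (z-z_i^*)^{-1} x_i^* y_i^{*H} + R(z)$ holds pointwise on $\partial\Omega \subset \mathcal{N}\setminus\{z_1^*,\ldots,z_l^*\}$. By linearity of the integral and the fact that each $x_i^* y_i^{*H}$ is a constant matrix, I would pull the matrices out:
\[
= \sum_{i=1}^l \left( \frac{1}{2\pi\sqrt{-1}}\oint_{\partial\Omega} \frac{f(z)}{z-z_i^*}\,dz \right) x_i^* y_i^{*H} + \frac{1}{2\pi\sqrt{-1}}\oint_{\partial\Omega} f(z)R(z)\,dz .
\]

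It then remains to evaluate the two kinds of integrals. For the first kind, since $f$ is holomorphic on $\mathcal{N}$ and $z_i^*$ lies in the interior of the region bounded by $\partial\Omega$, Cauchy's integral formula gives $\frac{1}{2\pi\sqrt{-1}}\oint_{\partial\Omega} \frac{f(z)}{z-z_i^*}\,dz = f(z_i^*)$. For the second, $f\cdot R$ is holomorphic on all of $\mathcal{N}$, hence on a simply connected neighborhood of $\overline{\Omega}$, so Cauchy's theorem yields $\frac{1}{2\pi\sqrt{-1}}\oint_{\partial\Omega} f(z)R(z)\,dz = 0$ (applied entrywise to the matrix $R$). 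Combining these gives $\sum_{i=1}^l f(z_i^*)\, x_i^* y_i^{*H}$, which matches the claim after writing $x_i, y_i$ for $x_i^*, y_i^*$.

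I do not expect any serious obstacle here — this is essentially the residue theorem packaged through Theorem~\ref{thm:keldysh_simple}. The only points requiring a little care are topological: one must know that $\partial\Omega$ can be taken inside $\mathcal{N}$ and that it separates the $z_i^*$ from the region where $R$ and $f$ are holomorphic, and that it is regular enough (e.g. piecewise $C^1$) to invoke Cauchy's theorem and formula; these are exactly the hypotheses already built into the setup of Theorem~\ref{thm:keldysh_simple} (the neighborhood $\mathcal{N}$ of the compact set $\Omega$) together with the standing assumption that $\partial\Omega$ denotes the Euclidean boundary of the compact domain $\Omega$. If one wants to be fully rigorous about the matrix-valued integral, one simply notes that all the above manipulations are carried out entrywise, since integration, the decomposition, and the holomorphy statements are all entrywise.
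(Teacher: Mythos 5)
Your proposal is correct and is exactly the standard residue-theorem argument that the paper (following Beyn) implicitly relies on but leaves unwritten: substitute the Keldysh-type resolvent expansion of Theorem~\ref{thm:keldysh_simple}, apply Cauchy's integral formula to each simple-pole term, and kill the holomorphic remainder $f(z)R(z)$ by Cauchy's theorem. Since the paper states Theorem~\ref{thm:res_simple} without proof, there is nothing to compare beyond noting that your filled-in argument is the intended one; the only hypothesis you correctly made explicit, which the paper's terse statement leaves implicit, is that $f$ is holomorphic on the neighborhood $\mathcal{N}$ of $\Omega$.
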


Under the assumption that only a few eigenvalues lie within $\Omega$, i.e., $l<n$, and all eigenvectors are linearly independent, we can extract the eigenvalues and corresponding eigenvectors from the following two contour integrals
\[ A_0 = \frac{1}{2\pi\sqrt{-1}}\oint_{\partial\Omega} A(z)^{-1}\hat{V}dz ,
\qquad  A_1 = \frac{1}{2\pi\sqrt{-1}}\oint_{\partial\Omega} zA(z)^{-1}\hat{V}dz,\]
with $\hat{V}\in\C^{n\times q}$, $q\geq l$ a random matrix of full rank $q$. 
Using Theorem~\ref{thm:res_simple}, we see that
\[A_0 = \sum_{i=1}^l x_i^*y_i^{*H}\hat{V} = XY^H\hat{V}, \quad A_1 = \sum_{i=1}^lz_i^*x_i^*y_i^{*H}\hat{V} = XZY^H\hat{V},\]
where $X$ and $Y$ have the right and left eigenvectors for their columns and $Z$ is a diagonal matrix containing the corresponding eigenvalues.
The matrix $A_0$ has rank at most $l$ for random choices of $\hat{V}$, so that a reduced singular value decomposition can be expressed as
\[ A_0 = V_0\Sigma_0W_0^H\]
with rectangular $V_0\in\C^{n\times l}$ and $W_0\in\C^{q\times l}$ and diagonal matrix $\Sigma_0 = \text{diag}(\sigma_1,\ldots,\sigma_l)$.
In \cite{beyn2012integral} it is shown, via some linear algebra manipulations, that 
\[ V_0^HA_1W_0\Sigma_0^{-1} = SZS^{-1}.\]
This decomposition reveals the diagonal matrix $Z$ containing the eigenvalues, while the corresponding eigenvectors can be extracted from $V=V_0S$.

Since nonlinear eigenvalue problems can have more eigenvalues than the size of the matrix, it is necessary to extend this approach to the case where $l>n$.
Luckily, Beyn's algorithm generalizes easily to this case.
First the matrix $\hat{V}\in\C^{n\times n}$ is now a square matrix of full rank which is used to calculate so-called higher order moments of the contour integrals:
\[ A_k = \frac{1}{2\pi\sqrt{-1}}\oint_{\partial\Omega} z^kA(z)^{-1}\hat{V}dz.\]
It should be clear that $A_k$ can be decomposed as $A_k = XZ^kY^H\hat{V}$.
From these higher order moments, we can calculate two block Hankel matrices 
\begin{equation} \label{eq:Bi}
    B_0=\begin{pmatrix}
A_0 &\cdots& A_{M-1} \\
\vdots & & \vdots \\
A_{M-1} & \cdots &A_{2M-2}
\end{pmatrix}, \text{ and }
B_1=\begin{pmatrix}
A_1 & \cdots & A_{M} \\
\vdots & & \vdots \\
A_M & \cdots & A_{2M-1}
\end{pmatrix}. 
\end{equation}
In a similar way as with few eigenvalues, it can be shown that the rank of $B_0$ is equal to the number of eigenvalues in $\Omega$ such that the diagonazible matrix
\[ V_0^HB_1W_0\Sigma_0^{-1} = SZS^{-1}\]
is defined by the reduced singular value decomposition $B_0 = V_0\Sigma_0W_0^H$.
The eigenvalues are again the elements of the diagonal matrix $Z$ while the corresponding eigenvectors can be extracted from the first $n$ rows of $V_0S$.
Some additional technicalities need to be considered in the case of semi-simple and defective eigenvalues \cite{beyn2012integral}, but this falls outside the scope of this discussion.

We conclude the section with a discussion on how the moment matrices $A_k$ are computed in practice. We assume that $\partial\Omega$ is parameterized by a continuous function $\varphi: [0,2\pi) \rightarrow \C$.
The moment matrix $A_k$ is then expressed as
\[A_k = \frac{1}{2\pi\sqrt{-1}} \int_0^{2\pi} \varphi^k(t)A(\varphi(t))^{-1}\hat{V}\varphi^\prime(t) dt.\]
This integral can be approximated numerically by the trapezoidal rule with $N$ equidistant points $t_\ell=\frac{2 \ell \pi}{N}, \ell =0,\ldots,N-1$ as
\[ A_k \approx A_{k,N} = \frac{1}{N\sqrt{-1}}\sum_{\ell=0}^{N-1} \varphi^k(t_\ell)A(\varphi(t_\ell))^{-1}\hat{V}\varphi^\prime(t_\ell).\]
The choice of the trapezoidal rule integration scheme with equidistant points might feel somewhat arbitrary, but it often leads to satisfactory results with a limited amount of points \cite{beyn2012integral}. 
The impact of the integration scheme on the accuracy of the results is discussed in \cite{van2016nonlinear}.

The largest part of the computational cost of Beyn's method originates from the calculation of the moment matrices.
Note that most of the computation work can be reused between every moment matrix since the factor $A(\varphi(t_\ell))^{-1}\hat{V}$ is independent of the moment index $k$.
Each linear system $A(\varphi(t_\ell))^{-1}\hat{V}$ can be solved independently for every value of $t_\ell$ which leads to an efficient parallel implementation. In what follows, our aim is to generalize Beyn's method to the case with eigenvector nonlinearities.

\section{Resultants and traces} \label{sec:3}

In this section, we turn back to the PEPv from the Introduction. We discuss resultants and traces related to our equations $T(x,z) \cdot x = 0$. These algebraic objects fit into our strategy for solving a PEPv as follows.
\begin{enumerate}
    \item There is a polynomial ${\cal R}(z)$, obtained by evaluating a \emph{resultant}, whose roots are the eigenvalues of $T(x,z)$.
    \item \emph{Traces} are rational functions in $z$ whose denominator is (roughly) ${\cal R}(z)$.
    \item Traces can be evaluated using tools from numerical nonlinear algebra. This allows to perform numerical \emph{contour integration} along $\partial \Omega$ to compute eigenvalues.
\end{enumerate}
This section addresses points 1 and 2. Point 3 is the subject of the next section. We work in the ring $K[x] = K[x_1, \ldots, x_n]$ of polynomials in the variables $x_i$ with coefficients in the rational function field $K = \C(z)$. The polynomials $f_1, \ldots, f_n \in K[x]$ are the entries of the vector $T(x,z) \cdot x$. We assume that $f_i$ is homogeneous of degree $d_i + 1$ and write $f_i \in K[x_i]_{d_i+1}$.
\subsection{Resultants}
For fixed values $z = z^*$, the system of polynomial equations $f_1 = \cdots = f_n = 0$ encoded by the PEPv $T(x,z^*) \cdot x = 0$ consists of $n$ homogeneous equations on $\PP^{n-1}$. Generically, one expects such equations to have no solution with nonzero coordinates. The eigenvalues~are those special values of $z^*$ for which they \emph{do} have solutions, see Example \ref{ex:intro}. This is captured by a polynomial ${\cal R}(z)$ obtained via \emph{resultants}. We summarize the basics, and refer the reader to \cite[Chapters 3 and 7]{cox2006using} for more details.
Let $\A_i \subset \N^n, i = 1, \ldots, n$ denote the \emph{supports} of the polynomials $f_i \in K[x]$: if $f_i = \sum_{\alpha \in \N^n} c_{i,\alpha}(z) \, x^\alpha$, where $x^\alpha$ is short for $x_1^{\alpha_1} \cdots x^{\alpha_n}$, then 
\[ \A_i = \{ \alpha \in \N^n ~|~ c_{i,\alpha} \neq 0 \}. \]
We write $K[x]_{d_i+1} \supset K[x]_{\A_i} \simeq K^{|\A_i|}$ for the affine space over $K$ of polynomials with support contained in $\A_i$. A natural set of coordinates for $K[x]_{\A_i}$ is given by the coefficients $\{b_{i,\alpha} ~|~ \alpha \in \A_i\}$ of a generic polynomial with support $\A_i$: $h_i = \sum_{\alpha \in \A_i} b_{i,\alpha} x^\alpha \in K[x]_{\A_i}$. 
Let $Z_0 \subset K[x]_{\A_1} \times \cdots \times K[x]_{\A_n}$ be the set of tuples $(h_1, \ldots, h_n)$ for which $h_1 = \cdots = h_n = 0$ has a solution in $(K \setminus \{ 0 \} )^n$.
Its Zariski closure is $Z = \overline{Z_0} \subset K[x]_{\A_1} \times \cdots \times K[x]_{\A_n}$. Under mild assumptions on the ${\cal A}_i$, $Z$ has codimension one, so that it is defined by one polynomial equation in the coefficients of $h_1, \ldots, h_n$ \cite[Cor.~1.1]{sturmfels1994newton}. It turns out that, in this case, $Z$ is an irreducible variety defined over $\Q$ \cite[Lem.~1.1]{sturmfels1994newton}. 
The \emph{sparse resultant} $R_{\A_1, \ldots, \A_n}$ is the unique (up to sign) irreducible polynomial in $\Z[\, b_{i,\alpha}~|~ i =1, \ldots, n, \alpha \in \A_i \,]$ such that 
\[ (h_0, \ldots, h_n) \in Z \quad \Longleftrightarrow \quad R_{\A_1, \ldots, \A_n}(h_0, \ldots, h_n) = 0. \]
Evaluating the sparse resultant $R_{\A_1, \ldots, \A_n}$ at our tuple $(f_1, \ldots, f_n)$ means plugging in the coefficients $c_{i,\alpha}(z) \in K$ for the $b_{i,\alpha}$. Since we assume the coefficients of the $f_i$ to be polynomials in $z$, we obtain a polynomial 
\begin{equation} \label{eq:calR}
    {\cal R}(z) = R_{\A_1, \ldots, \A_n} (f_1, \ldots, f_n) \quad \in \, \C[z]. 
\end{equation} 
\begin{example}
Let $\A = \A_1 = \A_2 = \A_3 \subset \Z^3$ consist of all monomials of degree 2 in 3 variables. Consider 3 general ternary quadrics
\[ h_i \, = \, b_{i,1} \, x_1^2 + b_{i,2} \,  x_2^2 + b_{i,3} \, x_3^2 + b_{i,4} \, x_1x_2 + b_{i,5} \, x_1x_3 + b_{i,6} \, x_2x_3, \quad i = 1, 2, 3.\]
The resultant $R_{\A,\A,\A}$ is a polynomial of degree 12 in the 18 variables $b_{i,j}, i = 1, \ldots, 3, j = 1, \ldots, 6$, which characterizes when the three conics $\{h_i = 0 \} \subset \PP^2$ intersect. It has 21894 terms and can be computed as a $ 6 \times 6$ determinant, see \cite[Chapter 3, \S2]{cox2006using}. Plugging in the coefficients, i.e.~$b_{1,1} = 1, b_{1,2} = z, b_{1,3} = -1, b_{1,4} = z, \ldots$, we obtain the polynomial ${\cal R}(z) = R_{\A,\A,\A}(f_1,f_2,f_3)$ shown in Example \ref{ex:intro}.
\end{example}
\begin{example}
In the case of a polynomial eigenvalue problem (PEP) given by $T(z) \cdot x = 0$, we have ${\cal R}(z) = \det T(z)$.
\end{example}
\begin{definition}
The PEPv given by $T(x,z) \cdot x = 0$ is called \emph{regular} if ${\cal R}(z) \neq 0$.
\end{definition}
Unlike in the case of PEPs, regularity of a PEPv does not mean that there are finitely many eigenvalues. Here is an example.
\begin{example} \label{ex:regular}
We consider the PEPv $T(x,z) \cdot x = 0$ where 
\[ T(x,z) = \begin{pmatrix}
x_1 & (1+z)x_1 & x_2 \\ 
2x_1 & 3x_1 & (3+z)x_2 \\
2z x_1 & x_1 & x_2
\end{pmatrix} \quad \text{and} \quad \begin{matrix}
f_1 = x_1^2+(1+z)x_1x_2 + x_2x_3, \\
f_2 = 2x_1^2 + 3x_1x_2 + (3+z)x_2x_3,\\
f_3 = 2zx_1^2 + x_1x_2 + x_2x_3.
\end{matrix}\]
We calculate ${\cal R}(z) = 2z^3+8z^2-3z \neq 0$. However, for any $z^* \in \C$, $T(x^*,z^*) \cdot x^* = 0$, with $x^* = (0,0,1)^\top$ or $x^* = (0,1,0)^\top$.
\end{example}

To avoid such artefacts, we will limit ourselves to computing eigenpairs $(z^*, x^*)$ for which $x^*$ has no zero coordinates. That is, we look for eigenvectors in the \emph{algebraic torus} $\{x \in \PP^{n-1} ~|~ x_i \neq 0, i = 1, \ldots, n \}$. For such an eigenpair, we say that $z^*$ is an \emph{eigenvalue with toric eigenvector}. 
By construction, if $z^* \in \C$ is an eigenvalue of $T(x,z)$ with toric eigenvector, then ${\cal R}(z^*) = 0$. This implies the following statement. 
\begin{theorem}
A regular PEPv has finitely many eigenvalues with toric eigenvector. 
\end{theorem}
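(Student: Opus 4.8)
The plan is to deduce the statement directly from the facts assembled above, since it is essentially a corollary of the relationship between eigenvalues with toric eigenvector and the roots of ${\cal R}(z)$.

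First I would make the key inclusion explicit. By construction, ${\cal R}(z)$ is obtained from the sparse resultant $R_{\A_1, \ldots, \A_n}$ by substituting the coefficient polynomials $c_{i,\alpha}(z)$ of the $f_i$ for the variables $b_{i,\alpha}$; hence for any $z^* \in \C$, evaluation gives ${\cal R}(z^*) = R_{\A_1, \ldots, \A_n}(f_1(x,z^*), \ldots, f_n(x,z^*))$, the sparse resultant of the specialized system. If $(x^*, z^*)$ is an eigenpair with $x^*$ in the algebraic torus, then $f_1(x^*,z^*) = \cdots = f_n(x^*,z^*) = 0$ with all coordinates of $x^*$ nonzero; since $\C$ is algebraically closed, the defining property of the sparse resultant recalled above forces $R_{\A_1, \ldots, \A_n}(f_1(x,z^*), \ldots, f_n(x,z^*)) = 0$, i.e. ${\cal R}(z^*) = 0$. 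Consequently the set of eigenvalues with toric eigenvector is contained in the zero set $\{ z^* \in \C : {\cal R}(z^*) = 0 \}$.

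Next I would invoke regularity. Since the PEPv is regular, ${\cal R}(z) \in \C[z]$ is a nonzero univariate polynomial, and therefore has only finitely many roots. Combining this with the inclusion from the previous step shows that there are only finitely many eigenvalues with toric eigenvector, which is the claim. (One even obtains the explicit bound $\deg {\cal R}$ on their number, though this is not needed.)

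I do not anticipate any substantive difficulty here. The only point that deserves care is the ``by construction'' step: one must check that specializing $z \mapsto z^*$ commutes with forming the resultant, and that vanishing of the sparse resultant at a specialized tuple is equivalent to the existence of a common zero in the torus over the algebraically closed field $\C$ (as opposed to over $K = \C(z)$). This is exactly the content of the sparse-resultant characterization cited from \cite{sturmfels1994newton} (see also \cite[Chapters 3 and 7]{cox2006using}), applied over $\C$ rather than over $K$, and no new ideas are needed.
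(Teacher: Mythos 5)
Your proposal is correct and takes essentially the same approach as the paper: the sentence immediately preceding the theorem asserts exactly your key inclusion (eigenvalues with toric eigenvector are roots of ${\cal R}(z)$), and regularity means ${\cal R}(z)$ is a nonzero univariate polynomial, hence has finitely many roots. Your cautionary remark about specialization is reasonable but note that only the "easy" direction of the resultant characterization is needed (a common toric zero over $\C$ at $z = z^*$ forces the specialized resultant to vanish), not the converse equivalence.
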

It is \emph{not} true in general that each $z^*$ such that ${\cal R}(z^*) =0$, is an eigenvalue with toric eigenvector. We continue Example \ref{ex:regular}.

\begin{example}
There are no toric solutions to $T(x,z^*) \cdot x = 0$, with $z^* = 0$ and $T$ as in Example \ref{ex:regular}. This eigenvalue is picked up by our polynomial ${\cal R}(z)$ because it corresponds to a solution of $T(x,z^*) \cdot x = 0$ in a \emph{toric compactification} of $(\C \setminus \{0\})^n$. Note that for this eigenvalue, there is an `extra' non-toric eigenvector $(0,1,-1)^\top$.
\end{example}

\begin{definition}
An eigenvalue of the PEPv $T(x,z) \cdot x = 0$ with toric eigenvector is called \emph{simple} if it is a simple zero of ${\cal R}(z)$.
\end{definition}

\begin{example}
In Example \ref{ex:intro}, $z^* \approx 0.5919$ is a simple eigenvalue with toric eigenvector.
\end{example}
\subsection{Traces}
The roots of the polynomial ${\cal R}(z)$ are eigenvalues of the PEPv given by $T(x,z)\cdot x$. It is usually hard to compute ${\cal R}(z)$. In this section we discuss rational functions in $z$, called \emph{traces}, whose denominator is ${\cal R}(z)$. The upshot is that these traces can be evaluated using tools from numerical nonlinear algebra, so that residue techniques can be used to approximate its poles. 
We fix $n$ random homogeneous polynomials $a_1, \ldots, a_n \in \C[x]$ such that $\deg(a_i) = d_i = \deg(f_i)-1$. We write $a_i \in \C[x]_{d_i}$ and collect them in a vector $a = (a_1, \ldots, a_n)^\top \in \C[x]^n$. Consider the ideal $I_a$ generated by the entries of $T(x,z) \cdot x - a$:
\begin{equation} \label{eq:Ia} 
I_a = \langle f_1 - a_1, \ldots, f_n - a_n \rangle \subset K[x, x^{-1}]. \end{equation}
Here $K[x, x^{-1}] = K[x_1^{\pm 1}, \ldots, x_n^{\pm 1}]$ is the Laurent polynomial ring in $n$ variables with coefficients in $K$. Note that the ideal $I_a$ is \emph{not} homogeneous. We will assume throughout that the equations $f_i - a_i = 0$ have finitely many solutions in $(\overline{K} \setminus \{0\})^n$, where $\overline{K}$ is the algebraic closure of $K$. This is the field of Puiseux series $\overline{K} = \C \{ \! \{ z \} \! \} $. By \cite[Ch.~5, \S 3, Thm.~6]{cox2013ideals}, our assumption can equivalently be phrased as follows.
\begin{assumption} \label{assum:finitedim}
The dimension $\delta = \dim_K K[x, x^{-1}]/I_a$ is finite. 
\end{assumption}
The set of solutions to $f_1 - a_1 = \cdots = f_n - a_n = 0$ is denoted by 
\[ V(I_a) = \{ \xi \in (\overline{K} \setminus \{0\})^n ~|~ f_i(\xi) - a_i = 0, i = 1, \ldots, n \}. \]
A point $\xi \in V(I_a)$ has multiplicity $\mu(\xi)$. By Assumption \ref{assum:finitedim}, $\sum_{\xi \in V(I_a)} \mu(\xi) = \delta$. 
\begin{definition} \label{def:trace}
For a polynomial $p \in K[x,x^{-1}]$, the \emph{trace} ${\rm Tr}_p(I_a)$  is $ \sum_{\xi \in V(I_a)} \mu(\xi) \,  p(\xi)$.
\end{definition}
\begin{proposition} \label{prop:traceisrat}
For any Laurent polynomial $p \in K[x,x^{-1}]$, the trace ${\rm Tr}_p(I_a)$ is a rational function in $z$. That is, ${\rm Tr}_p(I_a) \in K$.
\end{proposition}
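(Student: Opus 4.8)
The plan is to show that the trace is invariant under the Galois group of $\overline{K}$ over $K$ and is integral over (a localization of) $K$, then invoke the fact that $K$ is the fixed field. Concretely, I would argue as follows. First, I would observe that the multiset $\{\xi ~|~ \xi \in V(I_a)\}$, counted with multiplicities $\mu(\xi)$, is defined over $K$: the ideal $I_a$ is generated by polynomials with coefficients in $K$, and under Assumption~\ref{assum:finitedim} the quotient $A := K[x,x^{-1}]/I_a$ is a $\delta$-dimensional $K$-algebra. The key point is that for any $p \in K[x,x^{-1}]$, multiplication by (the class of) $p$ is a $K$-linear endomorphism $m_p$ of $A$, and the classical trace formula identifies the usual linear-algebra trace of $m_p$ with $\sum_{\xi \in V(I_a)} \mu(\xi)\, p(\xi)$ — this is exactly the ``trace'' in Definition~\ref{def:trace}. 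Hence ${\rm Tr}_p(I_a) = \operatorname{tr}_{K}(m_p) \in K$, since the matrix of $m_p$ in any $K$-basis of $A$ has entries in $K$.

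So the main steps, in order, are: (i) invoke Assumption~\ref{assum:finitedim} to get that $A = K[x,x^{-1}]/I_a$ is a finite-dimensional $K$-vector space of dimension $\delta$; (ii) recall/establish that for $p \in K[x,x^{-1}]$ the map $m_p : A \to A$, $[g] \mapsto [pg]$, is well-defined and $K$-linear; (iii) prove the identity $\operatorname{tr}_K(m_p) = \sum_{\xi \in V(I_a)} \mu(\xi)\, p(\xi)$; (iv) conclude that the right-hand side lies in $K$ because the left-hand side is visibly a polynomial expression in the structure constants of $A$, which are elements of $K$.

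The only genuinely non-trivial step is (iii). The cleanest route is to pass to $\overline{K}$: since $\overline{K}$ is a perfect (in fact algebraically closed) field, $A \otimes_K \overline{K} \cong \overline{K}[x,x^{-1}]/I_a\overline{K}$ decomposes as a product of local Artinian $\overline{K}$-algebras $\prod_{\xi \in V(I_a)} A_\xi$ with $\dim_{\overline{K}} A_\xi = \mu(\xi)$ (this is precisely the statement that $\mu(\xi)$ is the local multiplicity, and it uses that $V(I_a)$ is finite, i.e. Assumption~\ref{assum:finitedim} again). Trace is additive over this product, and on each local factor $A_\xi$ the endomorphism $m_p$ has the single eigenvalue $p(\xi)$ (its only point), with multiplicity $\mu(\xi) = \dim_{\overline{K}} A_\xi$, so $\operatorname{tr}_{A_\xi}(m_p) = \mu(\xi)\, p(\xi)$. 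Summing gives (iii), and since extension of scalars does not change the trace, $\operatorname{tr}_K(m_p) = \operatorname{tr}_{\overline{K}}(m_p \otimes 1) = \sum_\xi \mu(\xi) p(\xi)$. I would cite \cite[Ch.~5, \S3]{cox2013ideals} (or the analogous eigenvalue theorem for multiplication maps in a $0$-dimensional quotient) for the decomposition and the eigenvalue statement, rather than reproving them.

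One bookkeeping caveat worth a sentence in the write-up: we are working in a Laurent polynomial ring, and $p$ may itself have coefficients in $K$ rather than $\C$ — neither causes trouble, since $A$ is still a finite-dimensional $K$-algebra and $m_p$ is still $K$-linear, so the argument is unchanged. I do not expect any real obstacle here; the content is entirely the standard ``trace = sum over the zero-dimensional scheme'' principle, and the proposition is its immediate corollary.
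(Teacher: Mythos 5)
Your proposal is correct and is essentially the paper's own argument: both pass to the multiplication map $M_p$ on the finite-dimensional $K$-algebra $K[x,x^{-1}]/I_a$, observe its matrix has entries in $K$ so its trace lies in $K$, and identify $\operatorname{tr}(M_p)$ with $\sum_{\xi}\mu(\xi)p(\xi)$. The only difference is that you sketch the proof of this last identity (base change to $\overline{K}$ and decomposition into local Artinian factors) where the paper simply cites \cite[Ch.~4, \S 2, Prop.~2.7]{cox2006using}.
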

\begin{proof}
This is a standard result from Galois theory, see for instance \cite[Ch.~6, Thm.~1.2]{lang02}. Another way to see this explicitly is by considering the $K$-linear map 
\[ M_p : K[x,x^{-1}]/I_a \longrightarrow K[x,x^{-1}]/I_a \quad \text{given by} \quad [f] \longmapsto [pf],\]
where $[f]$ denotes the residue class of $f \in K[x,x^{-1}]$ in $K[x,x^{-1}]/I_a$. This is called a \emph{multiplication map}. A matrix representation of such a map can be computed using linear algebra over $K$. A standard algorithm uses Gr\"obner bases \cite[Ch.~2, \S 4]{cox2006using}. Since $M_p$ can be represented by a $\delta \times \delta$ matrix with entries in $K$, its trace ${\rm tr}(M_p)$ lies manifestly in $K$. Moreover, since the trace is the sum of the eigenvalues, \cite[Ch.~4, \S  2, Prop.~2.7]{cox2006using} gives ${\rm tr}(M_p) = {\rm Tr}_p(I_a)$.
\end{proof}
\begin{example} \label{ex:trace}
Let $T$ be as in Example \ref{ex:intro}. The number $\delta$ is the number of Puiseux series solutions $x(z) = (x_1(z), x_2(z), x_3(z))$ to $f_1-a_1 = f_2 -a_2 = f_3-a_3 = 0$, with
\begin{align*}
    f_1 - a_1 &\,=\, x_1^2 +zx_2x_2 + zx_2^2+x_2x_3 + x_1x_3-x_3^2 - (b_{11} x_1 + b_{12} x_2 + b_{13} x_3), \\
    f_2 - a_2 &\,=\, x_1^2+(1+z)x_1x_2 + (1-z^2)x_2^2-zx_2x_3 + x_1x_3+x_3^2 - (b_{21} x_1 + b_{22} x_2 + b_{23} x_3), \\
    f_3 - a_3 &\, = \, (1+z)x_1^2+x_1x_2 + x_2^2-x_2x_3 + zx_1x_3 + (1-z)x_3^2 - (b_{31} x_1 + b_{32} x_2 + b_{33} x_3).
\end{align*}
Here $a_i = b_{i1}x_1 + b_{i2} x_2 + b_{i3}x_3$ are generic linear forms. Using \texttt{Maple}, we find $\delta = 8$ and \small
\[ {\rm Tr}_{x_2}(I_a) = \frac{(16b_{11} + 8b_{13})\,  z^{11}+(-4 b_{33}+ \cdots -16 b_{31}) \, z^{10} + \cdots + (-8b_{33}+\cdots+2b_{32})}{{\cal R}(z)}, \]
\normalsize
where ${\cal R}(z)$ is the polynomial from Example \ref{ex:intro}.
\end{example}
The fact that ${\cal R}(z)$ shows up as the denominator of ${\rm Tr}_{x_2}(I_a)$ in Example \ref{ex:trace} is no coincidence. To state our main result, we introduce some more notation. Let ${\cal C}_i \subset \Z^n, i = 1, \ldots, s$ be finite sets of lattice points. The sublattice of $\Z^n$ affinely generated by ${\cal C}_1, \ldots, {\cal C}_s$ is
\[ L({\cal C}_1, \ldots, {\cal C}_s) = \left \{\,  \sum_{\alpha \in {\cal C}_1} \ell_{1,\alpha} \, \alpha + \cdots + \sum_{\alpha \in {\cal C}_s} \ell_{s,\alpha} \, \alpha ~\big |~  \sum_{\alpha \in {\cal C}_i} \ell_{i, \alpha} = 0, \, \ell_{i,\alpha} \in \Z \right \}. \]
Let $\A_i$ be the support of $f_i$ and $\B_i$ that of $a_i$. We will make the following assumption.
\begin{assumption} \label{assum:lattice}
The lattice $L(\A_1, \ldots, \A_n)$ is equal to $\{ \alpha \in \Z^n ~|~ \alpha_1 + \cdots + \alpha_n = 0 \}$.
This can always be realized by a change of coordinates as long as $L(\A_1, \ldots, \A_n)$ has rank $n-1$.
\end{assumption}
We set $\A_0 = \{ e_1, \ldots, e_n \}$ with $e_i$ the $i$-th standard basis vector of $\Z^n$, $\B_0 = \{0\}$ and $\CC_i = \A_i \cup \B_i$ for $i = 0, \ldots, n$. The set $\CC_0 = \{0\} \cup \A_0$ contains all lattice points of the standard simplex in $\Z^n$. 
Note that, by Assumption \ref{assum:lattice}, $L(\CC_1, \ldots, \CC_n)$ has rank $n$. For any point $\omega$ in the dual lattice $(\Z^n)^\vee = \Z^n$ and any finite subset $\CC \subset \Z^n$, we set
\[ \CC^\omega = \{ \gamma \in \CC ~|~ \langle \omega, \gamma \rangle = \min_{\gamma' \in \CC} \, \langle \omega, \gamma' \rangle  \}. \]
Here $\langle \cdot, \cdot \rangle$ is the pairing between $\Z^n$ and its dual, i.e.~the usual dot product. For a Laurent polynomial $f = \sum_{\gamma \in \CC} c_\gamma \, x^\gamma$ supported in $\CC$, we write $f^\omega$ for the \emph{leading form} of $f$ w.r.t.~$\omega$:
\[f^\omega = \sum_{\gamma \in \CC^\omega} c_\gamma \, x^\gamma.\]
Below we use the resultant $R_{\CC_0, \CC_1, \ldots, \CC_n}$, which is a polynomial in $b_{i,\gamma}, i = 0, \ldots, n, \gamma \in \CC_i$, characterizing when $h_0 = \cdots = h_n = 0$ has a solution in $(\overline{K} \setminus \{0\})^n$, with $h_i = \sum_{\gamma \in \CC_i} b_{i,\gamma}  \, x^\gamma$. 

To give an explicit formula for the trace in terms of ${\cal R}(z)$, we will make the additional assumption that our ideal $I_a$ behaves like a \emph{generic intersection} in $(\overline{K} \setminus \{0\})^n$. To make this precise, we denote by $P_i = {\rm Conv}(\CC_i) \subset \R^n$ the \emph{Newton polytope} of $f_i - a_i$. This is the convex hull of the lattice points in $\CC_i$. The \emph{mixed volume} of $P_1, \ldots, P_n$, denoted ${\rm MV}(P_1, \ldots, P_n)$, is the generic number of solutions to a system of equations with supports $\CC_1, \ldots, \CC_n$. For definitions and examples, see for instance \cite[Sec.~5.1]{telen2020thesis}.
\begin{assumption} \label{assum:mv}
The dimension $\delta = \dim_K K[x,x^{-1}]/I_a$ equals ${\rm MV}(P_1, \ldots, P_n)$.
\end{assumption}
Assumption \ref{assum:mv} implies Assumption \ref{assum:finitedim}, so it suffices to work with Assumptions \ref{assum:lattice} and \ref{assum:mv}.
\begin{theorem} \label{thm:traceformula}
Let $T(x,z) \cdot x = (f_1, \ldots, f_n)^\top =  0$ be a PEPv satisfying Assumption \ref{assum:lattice} and let $a_i \in \C[x]_{d_i}$ be such that $I_a$ satisfies Assumption \ref{assum:mv}. Let $\CC_i$ be the support of $f_i - a_i$ and $\CC_0 = \{ 0, e_1, \ldots, e_n \}$. The PEPv given by $T(x,z)$ is regular and for $ p = \sum_{\gamma \in \CC_0} c_{0,\gamma} x^\gamma$ we have
\[ {\rm Tr}_{p} (I_a) =  \frac{{\cal Q}_{p,a}(z)}{{\cal R}(z) \cdot {\cal S}_a(z)}, \quad \text{where } {\cal Q}_{p,a}(z) = \sum_{\gamma \in \CC_0} c_{0,\gamma}  \frac{\partial R_{\CC_0, \CC_1, \ldots, \CC_n}}{\partial b_{0,\gamma}} (1,f_1-a_1, \ldots, f_n-a_n),\]
${\cal R}(z)$ is as in \eqref{eq:calR} and ${\cal S}_a(z)$ is a nonzero polynomial. \end{theorem}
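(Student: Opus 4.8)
The plan is to combine a Poisson/u-resultant product formula for the sparse resultant $R_{\CC_0, \CC_1, \ldots, \CC_n}$ with a logarithmic differentiation trick, and then to isolate the factor ${\cal R}(z)$ by a toric-geometric argument. Let $h_0 = \sum_{\gamma \in \CC_0} b_{0,\gamma} x^\gamma$ with $\CC_0 = \{0, e_1, \ldots, e_n\}$, where the coefficients $b_{0,\gamma}$ are kept as indeterminates, and set
\[ {\cal D}_a(z) \, = \, R_{\CC_0, \CC_1, \ldots, \CC_n}(1, f_1 - a_1, \ldots, f_n - a_n) \, \in \, \C[z]. \]
First I would establish the factorization
\[ R_{\CC_0, \CC_1, \ldots, \CC_n}(h_0, f_1 - a_1, \ldots, f_n - a_n) \, = \, {\cal D}_a(z) \cdot \prod_{\xi \in V(I_a)} h_0(\xi)^{\mu(\xi)} \]
as polynomials in the $b_{0,\gamma}$. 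Both sides are homogeneous of the same degree in the $b_{0,\gamma}$: the left side has degree $\deg_{\CC_0} R_{\CC_0, \CC_1, \ldots, \CC_n} = {\rm MV}(P_1, \ldots, P_n)$, which equals $\delta = \sum_{\xi} \mu(\xi)$ by Assumption \ref{assum:mv}, and the right side has degree $\sum_\xi \mu(\xi)$. Assumption \ref{assum:mv} is Bernstein-genericity, so $f_1 - a_1 = \cdots = f_n - a_n = 0$ has no solutions on the toric boundary of the projective toric variety attached to $P_1 + \cdots + P_n$; consequently the left side vanishes exactly when $h_0$ passes through some $\xi \in V(I_a)$, with the expected multiplicity $\mu(\xi)$. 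Two homogeneous forms of equal degree with equal zero divisor are proportional, which gives the factorization with ${\cal D}_a(z)$ (the value at $h_0 = 1$) as proportionality constant; that ${\cal D}_a \not\equiv 0$ follows by specializing $h_0$ to a generic polynomial, which avoids $V(I_a)$ entirely. I would cite the u-resultant / sparse Poisson formula from \cite[Ch.~7]{cox2006using}.

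Next, I would differentiate the factorization with respect to $b_{0,\gamma}$. Since $h_0(\xi) = \sum_{\gamma \in \CC_0} b_{0,\gamma}\, \xi^\gamma$ and ${\cal D}_a(z)$ is free of the $b_{0,\gamma}$, multiplying by $c_{0,\gamma}$, summing over $\gamma \in \CC_0$, and specializing to $h_0 = 1$ (so that $h_0(\xi) = 1$ for every $\xi$) gives
\[ {\cal Q}_{p,a}(z) \, = \, \sum_{\gamma \in \CC_0} c_{0,\gamma}\, \frac{\partial R_{\CC_0, \CC_1, \ldots, \CC_n}}{\partial b_{0,\gamma}}(1, f_1 - a_1, \ldots, f_n - a_n) \, = \, {\cal D}_a(z) \sum_{\xi \in V(I_a)} \mu(\xi)\, p(\xi) \, = \, {\cal D}_a(z)\, {\rm Tr}_p(I_a). \]
Hence ${\rm Tr}_p(I_a) = {\cal Q}_{p,a}(z)/{\cal D}_a(z)$, which also recovers Proposition \ref{prop:traceisrat} for $p$ supported in $\CC_0$.

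It remains to show ${\cal R}(z) \mid {\cal D}_a(z)$ in $\C[z]$. Consider the covector $-\mathbf{1} = (-1, \ldots, -1)$. Because $\deg a_i = d_i < d_i + 1 = \deg f_i$, the face of $P_1 + \cdots + P_n$ in direction $-\mathbf{1}$ is $\sum_i {\rm Conv}(\A_i)$, the leading form $(f_i - a_i)^{-\mathbf{1}}$ is $f_i$, and $h_0^{-\mathbf{1}} = \sum_i b_{0,e_i} x_i$, which is the zero polynomial when $h_0 = 1$; more generally, on every torus orbit $O_\tau$ of the toric variety attached to $\sum_i P_i$ with $-\mathbf{1} \in \tau$, the function $1$ restricts to $0$ while $f_i - a_i$ restricts to $f_i$. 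The union of the closures of such orbits is exactly the toric subvariety $X_{\A}$ associated to $\sum_i {\rm Conv}(\A_i)$. Therefore, whenever ${\cal R}(z^*) = R_{\A_1, \ldots, \A_n}(f_1(\cdot,z^*), \ldots, f_n(\cdot,z^*)) = 0$, the forms $f_1(\cdot,z^*), \ldots, f_n(\cdot,z^*)$ have a common zero in $X_{\A}$, which is automatically a common zero of $1, f_1(\cdot,z^*) - a_1, \ldots, f_n(\cdot,z^*) - a_n$ in the ambient toric variety, forcing ${\cal D}_a(z^*) = 0$. Running this argument with the coefficients of the $f_i$ generic shows the hypersurface $\{ R_{\A_1, \ldots, \A_n} = 0 \}$ is contained in $\{ {\cal D}_a = 0 \}$; since $R_{\A_1, \ldots, \A_n}$ is irreducible \cite[Lem.~1.1]{sturmfels1994newton}, it divides ${\cal D}_a$, and specializing the coefficients back to our polynomials in $z$ yields ${\cal D}_a(z) = {\cal R}(z)\, {\cal S}_a(z)$ with ${\cal S}_a \in \C[z]$. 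Finally, ${\cal D}_a \not\equiv 0$ forces ${\cal R} \not\equiv 0$ (so the PEPv is regular) and ${\cal S}_a \not\equiv 0$, and the trace formula becomes ${\rm Tr}_p(I_a) = {\cal Q}_{p,a}(z)/({\cal R}(z)\, {\cal S}_a(z))$.

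The main obstacle is the precise justification of the u-resultant factorization: one must check that the proportionality factor is genuinely independent of the $b_{0,\gamma}$ (using both the degree identity $\deg_{\CC_0} R_{\CC_0, \CC_1, \ldots, \CC_n} = {\rm MV}(P_1, \ldots, P_n)$ and the vanishing of all toric-boundary contributions granted by Assumption \ref{assum:mv}) and that the order of vanishing of $R_{\CC_0, \CC_1, \ldots, \CC_n}(h_0, \ldots)$ along each hyperplane $\{ h_0(\xi) = 0 \}$ is exactly $\mu(\xi)$. A secondary point is ensuring that $-\mathbf{1}$ indexes a genuine (top-degree) face of $P_1 + \cdots + P_n$, for which one uses the genericity of the $a_i$ — so that $\deg a_i = d_i$ holds with equality and each ${\rm Conv}(\A_i)$ is a proper face of $P_i$ — together with Assumption \ref{assum:lattice}.
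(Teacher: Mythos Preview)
Your approach is correct and closely parallels the paper's, but the execution differs in a way worth noting. The paper does not rederive the Poisson formula: it invokes Theorem~2.3 of \cite{d2008rational} directly to obtain ${\rm Tr}_p(I_a) = C\cdot {\cal Q}_{p,a}(z)/{\cal D}_a(z)$ (your Steps~1--2 are an ab initio proof of that result via logarithmic differentiation of the $u$-resultant factorization). For the denominator, the paper invokes Proposition~2.6 of the same reference, which gives the full \emph{product} decomposition
\[
{\cal D}_a(z)\;=\;\prod_{\omega} R_{\CC_1^\omega,\ldots,\CC_n^\omega}\bigl((f_1-a_1)^\omega,\ldots,(f_n-a_n)^\omega\bigr)^{\delta_\omega},
\]
so that ${\cal R}(z)$ appears as the factor for $\omega=\omega^*=(-1,\ldots,-1)$, and ${\cal S}_a(z)$ is \emph{explicitly} the product of the remaining face resultants (times $C^{-1}$). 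Assumption~\ref{assum:lattice} is used there to check the combinatorial exponent $\delta_{\omega^*}=1$, and Assumption~\ref{assum:mv} to ensure no face resultant vanishes identically.

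Your divisibility argument in Step~3 (irreducibility of $R_{\A_1,\ldots,\A_n}$ plus the observation that on the $-\mathbf{1}$ face the constant $1$ has zero leading form) recovers ${\cal R}(z)\mid {\cal D}_a(z)$ and hence the theorem as stated, but does not produce the explicit face-resultant expression for ${\cal S}_a(z)$. That expression is precisely what the paper uses immediately afterwards in Theorems~\ref{thm:unmixed} and~\ref{thm:pyramid} to decide when ${\cal S}_a$ is a nonzero constant. So your proof is a valid, more self-contained alternative for Theorem~\ref{thm:traceformula} itself, at the cost of the structural information about ${\cal S}_a$ that the paper's citation-based route provides for free.
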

\begin{proof}
Our starting point is Theorem 2.3 in \cite{d2008rational}, which expresses the trace as 
\[ {\rm Tr}_{p} (I_a) =  C \cdot \frac{{\cal Q}_{p,a}(z)}{R_{\CC_0, \ldots, \CC_n}(1, f_1-a_1, \ldots, f_n-a_n)}
\]
for a nonzero constant $C$. Proposition 2.6 in the same paper writes the denominator $R_{\CC_0, \ldots, \CC_n}(1, f_1-a_1, \ldots, f_n-a_n)$ as a product of \emph{face resultants}. More precisely, we have
\[R_{\CC_0, \ldots, \CC_n}(1, f_1-a_1, \ldots, f_n-a_n) = \prod_{\omega} R_{\CC_1^\omega, \ldots, \CC_n^\omega}((f_1-a_1)^\omega, \ldots, (f_n - a_n)^\omega)^{\delta_{\omega}}, \]
where the product ranges over the primitive inward pointing facet normals $\omega$ of the Minkowski sum $P_1 + \cdots + P_n$. The exponents $\delta_{\omega}$ are defined combinatorially from the ${\cal C}_i$ in the discussion preceeding \cite[Prop.~2.6]{d2008rational}. By Assumption \ref{assum:mv}, none of the face resultants vanishes identically. Let $\omega^* = (-1,\ldots,-1) \in (\Z^n)^\vee$. We have $\CC_i^{\omega^*} = \A_i$ and $(f_i - a_i)^{\omega^*} = f_i$, which shows that $T(x,z)$ is regular and that ${\cal R}(z)^{\delta_{\omega^*}}$ is a factor in the denominator of ${\rm Tr}_{p}(I_a)$. Assumption \ref{assum:lattice} and the fact that ${\rm Conv}(\CC_0)$ is a standard simplex imply $\delta_{\omega^*} = 1$. The theorem follows by setting 
${\cal S}_a(z) = C^{-1} \cdot \prod_{\omega \neq \omega^*} R_{\CC_1^\omega, \ldots, \CC_n^\omega}((f_1-a_1)^\omega, \ldots, (f_n - a_n)^\omega)^{\delta_{\omega}}$.
\end{proof}

\begin{example} \label{ex:extraneous1}
Consider de PEPv $T(x,z) \cdot x = 0$ given by 
\[ T(x,z) = \begin{pmatrix}
1 & z & 1 \\ 2 & 1 & z \\ x_2 & (z+1)x_3 + x_2 & 0
\end{pmatrix}. \]
This satisfies Assumptions \ref{assum:lattice} and \ref{assum:mv}. We have $\CC_0 = \{ (0,0,0), (1,0,0), (0,1,0), (0,0,1) \}$, $a_1, a_2 \in \C$ and $a_3(x) = b_{31}x_1 + b_{32} x_2 + b_{33}x_3$. The trace for $p = x_1$ is 
\begin{equation} \label{eq:extrandenom}
 {\rm Tr}_{x_1}(I_a) = \frac{b_{31} \, z^4 + (a_1+a_2-b_{32}-2b_{33})\, z^3 + \cdots + (2 a_1 + 4 a_2 + b_{31}-2 b_{32} - b_{33})}{(z^2+2z-2)(z-2)}. 
 \end{equation}
Here ${\cal R}(z) = z^2 + 2z-2$ and ${\cal S}_a(z) = z-2$ is independent of $a$. We will explain the extraneous factor ${\cal S}_a(z)$ in Example \ref{ex:extraneous2} below. 
\end{example}

\begin{corollary}
If the PEPv $T(x,z) \cdot x = (f_1, \ldots, f_n)^\top =  0$ and the ideal $I_a$ satisfy Assumptions \ref{assum:lattice} and \ref{assum:mv}, then an eigenvalue $z^*$ of $T(x,z)$ with toric eigenvector is a pole of ${\rm Tr}_p(I_a)$ if ${\cal Q}_{p,a}(z^*) \neq 0$. Moreover, simple such eigenvalues correspond to simple poles of the trace. 
\end{corollary}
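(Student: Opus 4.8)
The plan is to read the corollary directly off the trace formula of Theorem~\ref{thm:traceformula}. Under Assumptions~\ref{assum:lattice} and~\ref{assum:mv} that theorem gives
\[ {\rm Tr}_p(I_a) \;=\; \frac{{\cal Q}_{p,a}(z)}{{\cal R}(z)\,{\cal S}_a(z)}, \]
a quotient of polynomials in $\C[z]$ whose denominator does not vanish identically (the PEPv is regular, so ${\cal R}(z) \not\equiv 0$, and ${\cal S}_a(z) \not\equiv 0$ by the same theorem). The other ingredient I would use is the observation made when ${\cal R}(z)$ was constructed in Section~\ref{sec:3}: any eigenvalue $z^*$ of $T(x,z)$ with toric eigenvector satisfies ${\cal R}(z^*) = 0$, hence is a zero of the denominator ${\cal R}(z)\,{\cal S}_a(z)$.

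For the first assertion I would rule out cancellation at $z^*$. Write ${\rm Tr}_p(I_a) = P(z)/Q(z)$ in lowest terms, so that $P(z)\,{\cal R}(z)\,{\cal S}_a(z) = Q(z)\,{\cal Q}_{p,a}(z)$ as polynomials. Evaluating at $z^*$ and using ${\cal R}(z^*) = 0$ gives $Q(z^*)\,{\cal Q}_{p,a}(z^*) = 0$; since ${\cal Q}_{p,a}(z^*) \neq 0$ by hypothesis, $Q(z^*) = 0$, so $z^*$ is a genuine pole of ${\rm Tr}_p(I_a)$ (and $P(z^*)\neq 0$ because $\gcd(P,Q)=1$). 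For the second assertion, recall that $z^*$ being simple means it is a simple zero of ${\cal R}(z)$; then the multiplicity of $z^*$ as a zero of ${\cal R}(z)\,{\cal S}_a(z)$ equals $1$ plus its multiplicity as a zero of ${\cal S}_a$, and since ${\cal Q}_{p,a}(z^*)\neq 0$ this number is exactly the pole order of ${\rm Tr}_p(I_a)$ at $z^*$. Thus the second claim reduces to showing ${\cal S}_a(z^*)\neq 0$.

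This last reduction is where the real content sits, and I expect it to be the main obstacle. Unwinding the construction of ${\cal S}_a$ from the proof of Theorem~\ref{thm:traceformula}, it is, up to the nonzero constant $C^{-1}$, the product over the primitive inward facet normals $\omega \neq \omega^* = (-1,\dots,-1)$ of $P_1+\cdots+P_n$ of the face resultants $R_{\CC_1^\omega,\dots,\CC_n^\omega}\big((f_1-a_1)^\omega,\dots,(f_n-a_n)^\omega\big)^{\delta_\omega}$. A factor vanishing at $z^*$ means the leading forms $(f_i-a_i)^\omega$ acquire a common zero in $(\C\setminus\{0\})^n$ at $z^*$. I would dispose of this by a genericity argument on $a$: for facets $\omega$ whose leading forms $(f_i-a_i)^\omega$ involve coefficients of $a$, an incidence-variety dimension count shows that for random $a$ the $z$-roots of the corresponding face resultant avoid the fixed, $a$-independent roots of ${\cal R}(z)$; the remaining facets — those with $(f_i-a_i)^\omega = f_i^\omega$ for all $i$, if any occur — would produce a shared root only through a non-generic coincidence among the entries of $T(x,z)$, which I would exclude under the paper's standing genericity hypotheses. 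Everything outside this non-vanishing statement is routine manipulation of the trace formula.
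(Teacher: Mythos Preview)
The paper states the corollary without proof, as an immediate consequence of Theorem~\ref{thm:traceformula}. Your argument for the first assertion is exactly the intended one and is correct.

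For the second assertion you correctly identify that the pole order at a simple zero $z^*$ of ${\cal R}$ equals $1 + \operatorname{ord}_{z^*}{\cal S}_a$, so the claim reduces to ${\cal S}_a(z^*)\neq 0$. Here you go further than the paper: the paper does \emph{not} attempt to establish ${\cal S}_a(z^*)\neq 0$ as part of this corollary. Instead, immediately afterward it writes ``It would be desirable to have ${\cal S}_a(z)$ equal to a nonzero constant'' and then proves Theorems~\ref{thm:unmixed} and~\ref{thm:pyramid} giving structural hypotheses on the supports under which ${\cal S}_a\in\C\setminus\{0\}$ identically. Read in context, the second sentence of the corollary is therefore somewhat informal, with the rigorous control of ${\cal S}_a$ deferred to those theorems and to Theorem~\ref{thm:main}.

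Your proposed genericity argument has a genuine weak spot. For facets $\omega$ whose leading forms involve coefficients of $a$, varying $a$ can indeed move the roots of the corresponding face resultant off the fixed zero set of ${\cal R}$. But for facets with $(f_i-a_i)^\omega=f_i^\omega$ for all $i$, the factor is determined entirely by $T$ and no choice of $a$ helps; Example~\ref{ex:extraneous1}/\ref{ex:extraneous2} shows such $a$-independent factors do arise. You then invoke ``the paper's standing genericity hypotheses'' on $T$, but the paper makes no such assumption---$T$ is given, not generic. So this last step does not go through as written, and the paper's route (restricting the Newton polytopes so that the only contributing facet normal is $\omega^*$) is how the issue is actually resolved.
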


In the above notation. It would be desirable to have ${\cal S}_a(z)$ equal to a nonzero constant, and ${\cal Q}_{p,a}(z^*) \neq 0$ for all simple eigenvalues of $T(x,z)$. We now discuss when this happens. Let $P = P_1 + \cdots + P_n$ be the Minkowski sum of the Newton polytopes $P_i = {\rm Conv}(\CC_i)$. In the proof of Theorem \ref{thm:traceformula} we derived 
\[
    {\cal S}_a(z) = C^{-1} \cdot \prod_{\omega \neq \omega^*} R_{\CC_1^\omega, \ldots, \CC_n^\omega}((f_1-a_1)^\omega, \ldots, (f_n - a_n)^\omega)^{\delta_{\omega}},
\]
where $\omega$ ranges over the inner facet normals to $P$. It follows from the definition of $\delta_\omega$ in \cite[Section 2]{d2008rational} that the only facet normals $\omega$ for which $\delta_{\omega} \neq 0$ are those for which $0 \notin \CC_0^\omega$. This gives a sufficient condition for ${\cal S}_a(z)  \in \C \setminus \{0\}$. 
Let $P_0 = {\rm Conv}(\CC_0)$ be the standard simplex in $\R^n$. If the monomials $x_j^{d_i + 1}, j = 1, \ldots, n$ appear in $f_i$, and $x_j^{d_i}$ appear in $a_i$, then 
\begin{equation} \label{eq:dense}
 P_i = {\rm Conv}(\CC_i) = {\rm cl}((d_i + 1) \cdot P_0 \setminus (d_i \cdot P_0)), 
\end{equation}
where ${\rm cl}(\cdot)$ denotes the Euclidean closure in $\R^n$.
\begin{theorem} \label{thm:unmixed}
Let $T(x,z) \cdot x = (f_1, \ldots, f_n)^\top =  0$ be a PEPv satisfying Assumption \ref{assum:lattice},  with $\deg(f_i) = d_i + 1$. Let $a_i \in \C[x]_{d_i}$ be such that $I_a$ satisfies Assumption \ref{assum:mv} and $P_i = {\rm Conv}(\CC_i) = {\rm cl}((d_i + 1) \cdot P_0 \setminus (d_i \cdot P_0))$. Then ${\cal S}_a(z)$ in Theorem \ref{thm:traceformula} is a nonzero complex constant. %and all poles of ${\rm Tr}_p(I_a)$ are roots of ${\cal R}(z)$.
\end{theorem}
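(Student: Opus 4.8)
The plan is to apply the criterion recorded just before the statement. From the proof of Theorem~\ref{thm:traceformula},
\[ {\cal S}_a(z) \;=\; C^{-1}\cdot\prod_{\omega\neq\omega^*} R_{\CC_1^\omega,\ldots,\CC_n^\omega}\bigl((f_1-a_1)^\omega,\ldots,(f_n-a_n)^\omega\bigr)^{\delta_\omega}, \]
the product running over the primitive inward facet normals $\omega$ of $P = P_1 + \cdots + P_n$, and $\delta_\omega\neq 0$ forces $0\notin\CC_0^\omega$. Since $\CC_0 = \{0,e_1,\ldots,e_n\}$, one has $0\in\CC_0^\omega$ precisely when $\langle\omega,0\rangle\le\langle\omega,e_j\rangle$ for all $j$, i.e.\ when $\min_j\omega_j\ge 0$. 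So the whole statement reduces to the combinatorial claim: \emph{under \eqref{eq:dense}, the only inward facet normal $\omega$ of $P$ with $\min_j\omega_j<0$ is $\omega^* = (-1,\ldots,-1)$}. Once this is known, $\delta_\omega = 0$ for every $\omega\neq\omega^*$, the product above is empty, and ${\cal S}_a(z) = C^{-1}\in\C\setminus\{0\}$.

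To prove the reduced claim I would first unwind \eqref{eq:dense}: it says $P_i = \mathrm{cl}\bigl((d_i+1)P_0\setminus d_iP_0\bigr) = \{x\in\R^n_{\ge 0} : d_i\le x_1+\cdots+x_n\le d_i+1\}$, a ``truncated simplex'' (with $P_i = P_0$ when $d_i = 0$). Fix $\omega$ with $m:=\min_j\omega_j<0$ and set $J = \{j : \omega_j = m\}$. One checks that $\langle\omega,x\rangle = \sum_j\omega_j x_j \ge m\,(x_1+\cdots+x_n)$ for $x\in P_i$, that this lower bound is smallest when the coordinate sum is as large as possible, namely $d_i+1$ (the constraint $\sum_j x_j\ge d_i$ is inactive), and that equality throughout forces $x$ to be supported on $J$. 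Hence the minimizing face is $P_i^\omega = (d_i+1)\,\Delta_J$, where $\Delta_J = \{x\in\R^n_{\ge 0} : x_j = 0\ \text{for}\ j\notin J,\ \sum_{j\in J}x_j = 1\}$. Summing over $i$, $P^\omega = \sum_i P_i^\omega = \bigl(\sum_{i=1}^n(d_i+1)\bigr)\Delta_J$, which has dimension $|J|-1$. Therefore $\omega$ is an inward facet normal of $P$ iff $|J| = n$, i.e.\ iff $\omega\in\R_{<0}\cdot(1,\ldots,1)$, i.e.\ iff its primitive representative is $\omega^*$. That is exactly the reduced claim, and the theorem follows.

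Conceptually, this computation says that near its ``outer'' facet $P$ agrees face-by-face with the dilated standard simplex $\bigl(\sum_i(d_i+1)\bigr)P_0$, whose normal fan is that of $\PP^n$ and whose only inward facet normal with a negative coordinate is $\omega^*$; one may also phrase the argument that way. The single point that will need care is the face identity $P_i^\omega = (d_i+1)\Delta_J$ — one must be sure the ``inner'' facet $\{x_1+\cdots+x_n = d_i\}$ of the truncated simplex does not create an extra facet of $P$ with a negative normal coordinate. The argument above isolates this cleanly: that inner facet only supports faces in directions $\omega$ with $\min_j\omega_j\ge 0$ (there one minimizes $\langle\omega,\cdot\rangle$ by \emph{shrinking} the coordinate sum), which are exactly the directions irrelevant to the conclusion, and the degenerate case $d_i = 0$ (no inner facet) is trivial.
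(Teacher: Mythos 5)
Your proof is correct and reaches the same conclusion as the paper's, but by a somewhat different path. The paper's one-line argument simply \emph{asserts} the full list of facet normals of $P = P_1 + \cdots + P_n$ (implicitly using that each $P_i$, and hence $P$, is a truncated standard simplex, so the facet normals are $\pm(1,\ldots,1)$ and $e_1,\ldots,e_n$) and then observes that only $\omega^*$ has $0 \notin \CC_0^\omega$. You instead bypass the explicit computation of $P$: after translating $0\notin\CC_0^\omega$ into $\min_j \omega_j < 0$, you fix such an $\omega$, compute the minimizing face of each \emph{summand} $P_i$ directly (getting $(d_i+1)\Delta_J$ with $J$ the argmin set), use $P^\omega = \sum_i P_i^\omega$, and conclude from the dimension count $\dim P^\omega = |J|-1$ that $\omega$ can only be a facet normal if $|J|=n$, i.e.\ $\omega \in \R_{<0}(1,\ldots,1)$. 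This has two advantages over the paper's terse version: it supplies the justification the paper leaves implicit, and it is local to the summands, so it does not require first identifying the Minkowski sum or enumerating all of its facets (in particular you never need to distinguish whether $\omega_0 = (1,\ldots,1)$ is or is not a facet normal, which depends on whether $\sum_i d_i > 0$). Your treatment of the inner constraint $\sum_j x_j \ge d_i$ and the degenerate case $d_i=0$ is also careful and correct.
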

\begin{proof}
The theorem follows from the fact that, under the assumption \eqref{eq:dense}, the facet normals of $P = P_1 + \cdots + P_n$ are 
\[ \omega^* = (-1,\ldots, -1), ~\omega_0 = (1,\ldots,1), ~ \omega_1 = (1,0,\ldots, 0), ~\omega_2 = (0, 1, \ldots, 0), ~ \omega_n = (0,0,\ldots, 1). \]
Out of these, only for $\omega = \omega^*$ we have $0 \notin \CC_0^{\omega}$. 
\end{proof}
We present one more example of a family of PEPv's for which $S_a(z) \in \C \setminus \{0\}$. We assume that all $f_i$ are of the same degree $d+1$ and such that $x_j^{d+1}$ appears in $f_j$ for all $j$. We let $a_i = c_i \, x^\beta$ consist of one term of degree $d$, with $c_i \neq 0$. The resulting polytopes $P_i$ are all equal to a pyramid of height one over the simplex $(d+1) \cdot {\rm Conv}(e_1, \ldots, e_n)$.
\begin{theorem} \label{thm:pyramid}
Let $T(x,z) \cdot x = (f_1, \ldots, f_n)^\top =  0$ be a PEPv satisfying Assumption \ref{assum:lattice},  with $\deg(f_i) = d + 1$. Let $a_i(x) = c_i\, x^\beta \in \C[x]_{d}$ be such that $I_a$ satisfies Assumption \ref{assum:mv}. Then ${\cal S}_a(z)$ in Theorem \ref{thm:traceformula} is a nonzero complex constant. %and all poles of ${\rm Tr}_p(I_a)$ are roots of ${\cal R}(z)$.
\end{theorem}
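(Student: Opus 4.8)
The plan is to follow the same strategy as in the proof of Theorem \ref{thm:unmixed}: by Theorem \ref{thm:traceformula} the extraneous factor is
\[ {\cal S}_a(z) = C^{-1} \cdot \prod_{\omega \neq \omega^*} R_{\CC_1^\omega, \ldots, \CC_n^\omega}\bigl((f_1-a_1)^\omega, \ldots, (f_n - a_n)^\omega\bigr)^{\delta_{\omega}}, \]
where $\omega$ ranges over the primitive inner facet normals of $P = P_1 + \cdots + P_n$, and we know from the discussion after Theorem \ref{thm:traceformula} that $\delta_\omega = 0$ whenever $0 \in \CC_0^\omega$. So the whole argument reduces to a polyhedral computation: identify the facet normals of $P$, and check that for every one of them except $\omega^*$ the origin lies in $\CC_0^\omega$ (equivalently, $\langle \omega, 0 \rangle = 0 \le \langle \omega, e_j \rangle$ for all $j$, i.e.~$\omega$ has all nonnegative coordinates and is not $\omega^*$).

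First I would describe the common polytope. Each $P_i$ is the convex hull of $\CC_i = \A_i \cup \B_i$, where $\A_i$ is the support of the degree-$(d+1)$ form $f_i$ and $\B_i = \{\beta\}$ is the single exponent of $a_i = c_i x^\beta$ of degree $d$. Since $x_j^{d+1}$ appears in $f_j$ and, by Assumption \ref{assum:lattice}, the remaining monomials of $f_i$ span the degree-$(d+1)$ slice of the lattice, $\mathrm{Conv}(\A_i)$ is the full dilated simplex $(d+1)\cdot \mathrm{Conv}(e_1,\dots,e_n)$ for each $i$. Adjoining the extra vertex $\beta$ (which sits at height $d$, i.e.~on a parallel interior simplex) turns $P_i$ into the pyramid with apex $\beta$ over that facet; in particular all the $P_i$ are equal to one fixed polytope $Q$, and $P = n\cdot Q$. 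Then I would write down the facets of $Q$ explicitly. A pyramid of height one over the big simplex has: the base facet, with inner normal $\omega^* = (-1,\dots,-1)$; and $n$ lateral facets, each containing the apex $\beta$ together with one of the $n$ facets of the big simplex. The facets of the simplex $(d+1)\cdot\mathrm{Conv}(e_1,\dots,e_n)$ have inner normals $\omega_j = e_j$ for $j=1,\dots,n$ (the facet $\{x_j = 0\}$) — note this uses that the simplex lives in the affine hyperplane $\sum x_i = d+1$, so the "top" facet normal $(1,\dots,1)$ is not a genuine facet normal of $Q$ once the pyramid is formed (the apex breaks it), and the lateral facets inherit exactly the normals $e_1,\dots,e_n$, possibly perturbed but still with nonnegative entries. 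Concretely I would verify that each lateral facet of $Q$ has inner normal with all coordinates $\ge 0$: the apex $\beta$ and the simplex-facet $\{x_j=0,\ \sum x_i = d+1\}$ span a hyperplane whose inner normal, suitably scaled, is $\omega_j = e_j$ itself (since $\beta$ has $j$-th coordinate $\ge 0$ and $d < d+1$, $\beta$ lies on the correct side of $\{x_j = 0\}$ and the facet $\{x_j = 0\}$ of the simplex together with $\beta$ still cuts out the facet with normal $e_j$). Since $P = nQ$, the facet normals of $P$ are the same set $\{\omega^*, \omega_1,\dots,\omega_n\}$.

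Having this, the conclusion is immediate: for $\omega = \omega_j = e_j$ we have $\langle \omega_j, 0\rangle = 0$ and $\langle \omega_j, e_k\rangle = \delta_{jk} \ge 0$, so $0 \in \CC_0^{\omega_j}$, hence $\delta_{\omega_j} = 0$. Only $\omega^*$ survives in the product, and that term is precisely ${\cal R}(z)^{\delta_{\omega^*}}$ with $\delta_{\omega^*} = 1$ (as in Theorem \ref{thm:traceformula}, this was already absorbed into the denominator), so the product over $\omega \neq \omega^*$ is empty and ${\cal S}_a(z) = C^{-1} \in \C\setminus\{0\}$.

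The main obstacle I anticipate is the bookkeeping around the facet structure of $Q$ when $\beta$ is not in "general position" — e.g.~if $\beta = d\cdot e_j$ for some $j$, then $\beta$ lies directly above a vertex of the big simplex and some of the would-be lateral facets may degenerate or merge, changing the list of facet normals of $Q$. I would handle this by arguing that no matter where the apex $\beta$ sits (any lattice point of degree $d$), every facet of the pyramid $Q$ other than the base either contains $\beta$ or is a face of the big simplex's boundary, and in all cases its inner normal can be taken with nonnegative coordinates; the only normal with a negative coordinate is $\omega^*$, coming from the unique facet not containing the apex. A clean way to see this uniformly: $Q = \mathrm{Conv}\bigl((d+1)\Delta \cup \{\beta\}\bigr)$ where $\Delta = \mathrm{Conv}(e_1,\dots,e_n)$ and $\beta \in d\Delta$; since $d\Delta \subset (d+1)\Delta$ (strict, in the interior relative to the simplex's affine hull but touching the boundary hyperplanes $x_j = 0$ only where $\beta_j = 0$), the recession-type analysis of which supporting hyperplanes of $(d+1)\Delta$ remain valid for $Q$ shows the normal fan of $Q$ coarsens that of $(d+1)\Delta$ only by removing the ray $(1,\dots,1)$, confirming the normal list $\{\omega^*, e_1,\dots,e_n\}$. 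This is the step I'd write out most carefully; the rest is a direct quotation of the machinery already set up for Theorem \ref{thm:unmixed}.
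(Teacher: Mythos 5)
Your proposal follows the paper's strategy exactly: reduce $\mathcal{S}_a(z)$ to the face--resultant product from the proof of Theorem~\ref{thm:traceformula}, enumerate the facet normals of $P = P_1 + \cdots + P_n = n P_1$, and observe that only $\omega^*$ fails $0 \in \CC_0^\omega$. The paper's own proof is a one-sentence version of this, so you are supplying the polyhedral details the paper skips --- which is reasonable --- but one of those details is wrong.

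You assert that each lateral facet of $Q = \mathrm{Conv}\bigl((d+1)\Delta \cup \{\beta\}\bigr)$ has inner normal $e_j$, ``possibly perturbed,'' and that the only problematic position for $\beta$ is $\beta = d e_j$. Both claims are off. Whenever $\beta_j > 0$ (which is the generic case, not a degeneracy), $\beta$ does \emph{not} lie on $\{x_j = 0\}$, so the supporting hyperplane of the lateral facet through $\beta$ and $\{ x_j = 0\} \cap (d+1)\Delta$ is not $\{x_j = 0\}$ and its normal is not $e_j$. The correct primitive inner normal of the facet opposite the vertex $(d+1)e_j$ is
\[
\omega_j \;=\; \beta_j \cdot (1,\ldots,1) + e_j .
\]
Indeed $\langle \omega_j, (d+1)e_k\rangle = (d+1)\beta_j$ for $k \neq j$, $\langle \omega_j, \beta\rangle = \beta_j(d - \beta_j) + (\beta_j+1)\beta_j = (d+1)\beta_j$, and $\langle \omega_j, (d+1)e_j\rangle = (d+1)(\beta_j+1)$ is strictly larger, so $\omega_j$ supports that facet from inside; primitivity follows from $\gcd(\beta_j,\beta_j+1)=1$. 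This formula covers the case $\beta_j = 0$ automatically, so no case split on the position of $\beta$ is required, and there is no ``merging'' of facets to worry about: $Q$ is a simplex with the $n+1$ vertices $(d+1)e_1,\ldots,(d+1)e_n,\beta$, hence has exactly $n+1$ facets, one opposite each vertex.

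The miscalculation is harmless to the theorem --- $\beta_j(1,\ldots,1)+e_j$ is still coordinatewise nonnegative, so $\min_{\gamma \in \CC_0}\langle \omega_j,\gamma\rangle = 0$ is attained at $0$, and only $\omega^*$ survives in the product --- but the justification you give (normals ``inherited'' from $(d+1)\Delta$, normal fan of $Q$ obtained by ``removing the ray $(1,\ldots,1)$'') is not accurate as stated. I would replace that whole discussion with the direct computation above: list the $n+1$ facets of the simplex $Q$, write down their normals, observe all but $\omega^*$ are nonnegative, done.
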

\begin{proof}
The polytope $P = P_1 + \cdots + P_n = n \cdot P_1$ has $n+1$ normal vectors. All of these are nonnegative, except $\omega^* = (-1, \ldots, -1)$. Therefore, only $\omega^*$ satisfies $0 \notin \CC_0^{\omega}$. 
\end{proof}
If ${\rm Conv}(\A_1) = \cdots = {\rm Conv}(\A_n)$, the argument in the proof of Theorem \ref{thm:pyramid} can be used to construct more general situations in which $P_1 = \cdots = P_n$ is a pyramid over ${\rm Conv}(A_i)$ and ${\cal S}_a(z) \in \C \setminus \{0\}$. We do not work this out explicitly. Here is an example where $S_a(z) \notin \C \setminus \{0\}$. 
\begin{example}  \label{ex:extraneous2}
The polytope $P = P_1 + P_2 + P_3$ from the PEPv in Example \ref{ex:extraneous1} is shown in Figure \ref{fig:polytope}. There are six facets. Their normal vectors $\omega_i$ in the dual lattice $(\Z^3)^\vee \simeq \Z^3$ are 
\[ \omega_1 = (0,0,1), ~ \omega_2 = -(1,0,1), ~ \omega_3 = -(1, 1, 1), ~ \omega_4 = (1,0,0), ~ \omega_5 = (1,1,1), ~ \omega_6 = (0,1,0). \]
Here $\omega^* = \omega_3$. The only other facet normal for which $0 \notin \CC_0^{\omega_i}$ is $\omega_2$. We calculate
\[ \CC_1^{\omega_2} = \CC_2^{\omega_2} = \{ (1,0,0), (0,0,1) \}, \quad \CC_3^{\omega_2} = \{(1,0,0), (0,0,1), (0,1,1), (1,1,0) \}. \]
The corresponding face equations are $f_1^{\omega_2}= f_2^{\omega_2} = f_3^{\omega_2} = 0$, with
\[ f_1^{\omega_2} = x_1 + x_3, \quad f_2^{\omega_2} = 2 x_1 + z x_3, \quad f_3^{\omega_2} = (z+1)x_2x_3 + x_1x_2 - b_{31}x_1 - b_{33}x_3. \]
These have a nontrivial solution if and only if the determinant of the linear system $f_1^{\omega_2} = f_2^{\omega_2} = 0$ vanishes. This explains $R_{\CC_1^{\omega_2}, \CC_2^{\omega_2}, \CC_3^{\omega_2}} = z-2$, which gives the extraneous factor in the denominator of \eqref{eq:extrandenom}.
\begin{figure}
\centering
\includegraphics[scale=0.3]{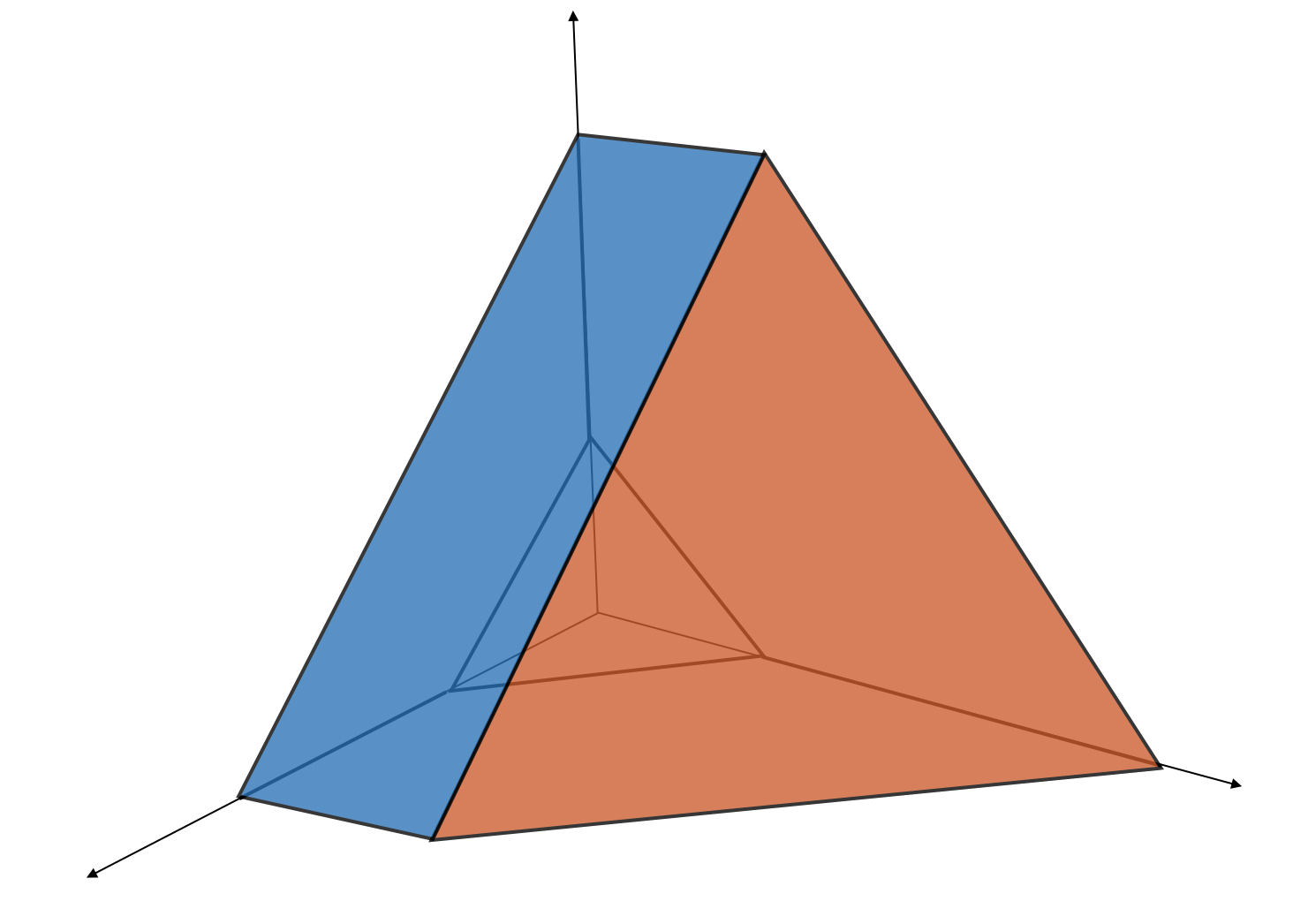}
\caption{The polytope $P$ from Example \ref{ex:extraneous2}. The facets corresponding to $\omega_2$ and $\omega_3$ are the quadrilateral and triangle coloured in blue and orange respectively. }
\label{fig:polytope}
\end{figure}
\end{example}
We conclude by briefly discussing the condition ${\cal Q}_{p,a}(z^*) \neq 0$. %For this, we suppose that the coefficients $c_{0,\beta}$ in Theorem \ref{thm:traceformula} are chosen generically. 
First of all, note that Assumption \ref{assum:mv} implies ${\rm Tr}_1(I_a) = \delta$, so by Theorem \ref{thm:traceformula} we have 
\[ {\cal Q}_{1,a}(z) =  \frac{\partial R_{\CC_0, \ldots, \CC_n}}{\partial b_{0,0}}(1,f_1-a_1, \ldots, f_n-a_n) = \delta \, {\cal R}(z) \, {\cal S}_a(z). \]
In particular, ${\cal Q}_{1,a}(z^*) = 0$ for every eigenvalue $z^*$ with toric eigenvector. Therefore, we will use the traces ${\rm Tr}_{x_i}(I_a)$, corresponding to the remaining exponents $\A_0 = \CC_0 \setminus \{0\}$. 

\begin{definition}
We say that an eigenvalue $z^*$ of $T(x,z)$ has a \emph{simple toric eigenvector} if ${\cal R}(z^*) = 0$ and, for generic choices of $a_i$, there is some $i \in \{1, \ldots, n\}$ for which ${\cal Q}_{x_i,a}(z^*) \neq 0$. 
\end{definition}
We point out that if $z^*$ has a simple toric eigenvector, then for generic $a_i$ the tuple $(1,f_1(x,z^*)-a_1(x), \ldots, f_n(x,z^*)-a_n(x))$ is a smooth point on the resultant hypersurface given by $\{R_{\CC_0, \ldots, \CC_n} = 0\}$. This implies that the corresponding eigenvector is unique.
We summarize the above discussion in the following theorem. 
\begin{theorem} \label{thm:main}
Under Assumptions \ref{assum:lattice} and \ref{assum:mv}, each simple eigenvalue $z^*$ of $T(x,z)$ with simple toric eigenvector is a pole of order one of the trace vector $({\rm Tr}_{x_1}(I_a), \ldots, {\rm Tr}_{x_n}(I_a)) \in \C(z)^n$. In the situations of Theorems \ref{thm:unmixed} and \ref{thm:pyramid}, all simple poles correspond to such eigenvalues.
\end{theorem}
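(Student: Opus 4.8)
The plan is to read everything off the closed form of Theorem~\ref{thm:traceformula}, namely ${\rm Tr}_{x_i}(I_a)={\cal Q}_{x_i,a}(z)/\big({\cal R}(z)\,{\cal S}_a(z)\big)$ with ${\cal S}_a\not\equiv 0$, so that the poles of the trace vector lie among the zeros of ${\cal R}(z)\,{\cal S}_a(z)$ and their orders follow from orders of vanishing of numerators and denominators; I combine this with the definitions of \emph{simple eigenvalue} and \emph{simple toric eigenvector}, the remark that a simple toric eigenvector makes the tuple $\tau^*=(1,f_1(x,z^*)-a_1(x),\dots,f_n(x,z^*)-a_n(x))$ a smooth point of $\{R_{\CC_0,\dots,\CC_n}=0\}$, and the corollary stated just after Theorem~\ref{thm:traceformula}. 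For the first assertion, let $z^*$ be a simple eigenvalue with a simple toric eigenvector: then ${\cal R}$ has a simple zero at $z^*$, the (unique) toric eigenvector exists so that $z^*$ is an eigenvalue with toric eigenvector, and for generic $a$ some ${\cal Q}_{x_i,a}(z^*)\neq 0$, so the corollary gives that $z^*$ is a simple pole of ${\rm Tr}_{x_i}(I_a)$. To see that no other component has a higher-order pole — all share the denominator ${\cal R}(z)\,{\cal S}_a(z)$ — it suffices to show ${\cal S}_a(z^*)\neq 0$. I would get this from the smoothness of $\tau^*$: if ${\cal S}_a(z^*)=0$, the factorization of $R_{\CC_0,\dots,\CC_n}(1,f_1-a_1,\dots,f_n-a_n)$ into face resultants from the proof of Theorem~\ref{thm:traceformula} shows that some $R_{\CC_1^\omega,\dots,\CC_n^\omega}$ with $\omega\neq\omega^*=(-1,\dots,-1)$ vanishes at $z^*$; together with ${\cal R}(z^*)=0$ (the $\omega^*$-factor) this produces two distinct boundary solutions of $f_i(x,z^*)-a_i(x)=0$ in the toric variety attached to $R_{\CC_0,\dots,\CC_n}$, which puts $\tau^*$ in the singular locus of the resultant hypersurface, contradicting the remark. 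Hence ${\cal S}_a(z^*)\neq 0$, the denominator has a simple zero at $z^*$, and $z^*$ is a pole of order exactly one of the trace vector.

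For the second assertion, in the settings of Theorems~\ref{thm:unmixed} and~\ref{thm:pyramid} we have ${\cal S}_a(z)\equiv c\in\C\setminus\{0\}$, so ${\rm Tr}_{x_i}(I_a)={\cal Q}_{x_i,a}(z)/(c\,{\cal R}(z))$, and since ${\rm Tr}_1(I_a)=\delta$ under Assumption~\ref{assum:mv} also ${\cal Q}_{1,a}(z)=\delta\,c\,{\cal R}(z)$. A pole of the trace vector at $z^*$ forces ${\cal R}(z^*)=0$; writing $m=\operatorname{ord}_{z^*}{\cal R}\geq 1$ and using the $\C$-linearity of $p\mapsto{\cal Q}_{p,a}$ in the coefficients of $p$ (with $\CC_0=\{0,e_1,\dots,e_n\}$), the pole order of the trace vector at $z^*$ equals $m-\min_i\operatorname{ord}_{z^*}{\cal Q}_{x_i,a}$ whenever this quantity is positive. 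So a simple pole means $\min_i\operatorname{ord}_{z^*}{\cal Q}_{x_i,a}=m-1$, and I must deduce $m=1$ and then $\min_i\operatorname{ord}_{z^*}{\cal Q}_{x_i,a}=0$, i.e.\ that $z^*$ is a simple eigenvalue with a simple toric eigenvector. I would do this by relating $m$ to the branches of $V(I_a)$ escaping to infinity as $z\to z^*$: since ${\cal S}_a$ is constant none escapes to the torus boundary, so an escaping branch can be written $\xi(z)=\eta(z)/s(z)$ with $\eta(z)$ tending to a torus point — necessarily a toric eigenvector — and $\operatorname{ord}_{z^*}s$ governed by $m$, and then check that for generic $a$ the pole of $\sum_\xi\mu(\xi)\,\xi_i$ records this order without cancellation, so that a simple pole can only come from a single branch escaping like $(z-z^*)^{-1}$ toward a unique toric eigenvector.

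The crux, and the step I expect to be the main obstacle, is the coupling between the order of vanishing of ${\cal R}(z)$ — the algebraic multiplicity of $z^*$ — and the pole order of the trace: concretely, the fact ${\cal S}_a(z^*)\neq 0$ at a simple toric eigenvalue for the first part, and the absence of accidental cancellation among the $\sum_\xi\mu(\xi)\,\xi_i$ for the second. Both reduce to excluding non-generic coincidences among the zero loci of ${\cal R}$, of the extraneous face resultants making up ${\cal S}_a$, and of the numerators ${\cal Q}_{x_i,a}$; genericity of the perturbation $a$ is the lever, and the delicate part is to isolate the precise genericity conditions needed and to verify that they follow from Assumptions~\ref{assum:lattice} and~\ref{assum:mv} together with the definition of a simple toric eigenvector (respectively from the hypotheses of Theorems~\ref{thm:unmixed} and~\ref{thm:pyramid}). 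The remaining bookkeeping with orders of vanishing is routine.
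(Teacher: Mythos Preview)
The paper gives no explicit proof of this theorem; it is presented as a summary of the preceding discussion, i.e.\ the trace formula of Theorem~\ref{thm:traceformula}, its corollary, and the definition of a simple toric eigenvector. Your proposal follows exactly this line and is in fact more careful than the paper in two places.

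First, you correctly observe that the first assertion needs ${\cal S}_a(z^*)\neq 0$: if it vanished, then the component with ${\cal Q}_{x_i,a}(z^*)\neq 0$ would already have a pole of order at least two. The paper's corollary tacitly assumes this. Your argument via smoothness of $\tau^*$ on the resultant hypersurface is the right heuristic---two vanishing face resultants correspond to two distinct boundary solutions in the incidence correspondence over $\tau^*$, hence (generically) a singular point of $\{R_{\CC_0,\ldots,\CC_n}=0\}$---though turning this into a rigorous statement for mixed sparse resultants needs a little more than what you wrote.

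Second, for the converse direction in the unmixed/pyramid settings you correctly note that ${\cal S}_a$ being a nonzero constant only places every pole among the zeros of ${\cal R}$. Ruling out that a \emph{multiple} zero of ${\cal R}$ produces a \emph{simple} pole through cancellation in the numerators ${\cal Q}_{x_i,a}$ is a genuine gap; the paper does not address it either, and your escaping-branch sketch does not close it. Your identification of this coupling between $\operatorname{ord}_{z^*}{\cal R}$ and the pole order of the trace as the crux is accurate, and flagging it honestly is the right call.
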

We leave the problem of determining the precise conditions under which a simple eigenvalue has a simple toric eigenvector for future research. In our examples and experiments from Section \ref{sec:6}, we observe that this is satisfied for all simple eigenvalues.

\section{Contour integration and homotopy continuation} \label{sec:4}

Let $T(x,z)$ be a PEPv satisfying Assumptions \ref{assum:lattice} and \ref{assum:mv}. We write the trace vector from Theorem \ref{thm:main} as${\rm Tr}_{\A_0}(I_a) = ({\rm Tr}_{x_1}(I_a), \ldots, {\rm Tr}_{x_n}(I_a))$. Using Definition \ref{def:trace} and Assumption \ref{assum:mv}, we see that the entries of ${\rm Tr}_{\A_0}(I_a)$ are computed as a sum of $\delta$ terms:
\begin{equation} \label{eq:tracereminder} 
{\rm Tr}_{x_i}(I_a) = \sum_{\xi \in V(I_a)} \, \xi_i.
\end{equation} 
The simple eigenvalues with simple toric eigenvector of $T(x,z)$ are among the poles of ${\rm Tr}_{\A_0}(I_a)$. We remind the reader that $a \in \C[x]$ has homogeneous entries of degree $d_i$, where $d_i$ is the degree in $x$ of the entries in the $i$-th row of $T(x,z)$. In analogy with Beyn's method, we evaluate the trace for several vectors $a$. We collect ${\rm Tr}_{\A_0}(I_{a^{(j)}})$ for $n$ random choices $a^{(1)}, \ldots, a^{(n)} \in \C^n$ in the columns of 
\begin{equation} \label{eq:U}
    U(z) = \begin{pmatrix}
\vrule & & \vrule \\
\\
{\rm Tr}_{\A_0}(I_{a^{(1)}}) & \cdots & {\rm Tr}_{\A_0}(I_{a^{(n)}}) \\
\\
\vrule & & \vrule
\end{pmatrix} \quad \in \C(z)^{n \times n}. 
\end{equation} 
Our next result uses notation from Theorem \ref{thm:traceformula} and explains our interest in the matrix $U(z)$.
\begin{theorem} \label{thm:REP}
Let $U(z)$ be as above and let $Q(z) =( {\cal Q}_{x_i,a^{(j)}}(z))_{i,j}$. Suppose that $\det Q(z) \neq 0$ and $z^*$ is a simple eigenvalue of $T(x,z)$ with simple toric eigenvector $x^* \in \PP^{n-1}$. If ${\cal S}_{a^{(j)}}(z^*) \neq 0$ for $j = 1, \ldots, n$, we have $ U(z^*)^{-1} \cdot x^* = 0$ and $z^*$ is a simple zero of $\det U(z)^{-1}$.
\end{theorem}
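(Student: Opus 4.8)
The plan is to combine Theorem~\ref{thm:traceformula} with elementary linear algebra over the function field $K = \C(z)$. By Theorem~\ref{thm:traceformula}, for each $j$ the $i$-th entry of the $j$-th column of $U(z)$ is ${\rm Tr}_{x_i}(I_{a^{(j)}}) = {\cal Q}_{x_i, a^{(j)}}(z) / ({\cal R}(z) \, {\cal S}_{a^{(j)}}(z))$. Hence, writing $D(z)$ for the diagonal matrix with entries ${\cal R}(z)\,{\cal S}_{a^{(j)}}(z)$ and $Q(z) = ({\cal Q}_{x_i, a^{(j)}}(z))_{i,j}$, we have the factorization $U(z) = Q(z)\, D(z)^{-1}$. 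First I would record that $\det U(z) = \det Q(z) / \prod_j ({\cal R}(z)\,{\cal S}_{a^{(j)}}(z))$, so $\det U(z)^{-1} = \prod_j ({\cal R}(z)\,{\cal S}_{a^{(j)}}(z)) / \det Q(z)$. Since $\det Q(z) \neq 0$ by hypothesis and each ${\cal S}_{a^{(j)}}(z^*) \neq 0$, while ${\cal R}(z)$ vanishes to order exactly one at the simple eigenvalue $z^*$, the numerator of $\det U(z)^{-1}$ vanishes to order exactly $n$ at $z^*$ and the denominator is nonzero there; one must also check the numerator and denominator have no common factor vanishing at $z^*$, which follows because $\det Q(z^*)\neq 0$. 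This already gives the second claim, that $z^*$ is a simple zero of $\det U(z)^{-1}$, once we interpret ``simple zero'' correctly — this is the one point I would be slightly careful about, since $\det U(z)^{-1}$ has a pole structure and ``simple zero'' should be read as: the order of vanishing of the numerator exceeds that of the denominator by exactly one. Actually the cleaner route is to work with the adjugate: $U(z)^{-1} = \operatorname{adj}(U(z))/\det U(z)$, and $\operatorname{adj}(U(z)) = \operatorname{adj}(D(z)^{-1})\operatorname{adj}(Q(z)) = \det(D(z)^{-1}) D(z) \operatorname{adj}(Q(z))$, so $U(z)^{-1} = D(z)\, Q(z)^{-1}$.

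For the first claim, $U(z^*)^{-1} \cdot x^* = 0$, the natural approach is to use the characterization of the eigenvector via the resultant hypersurface. Since $z^*$ has a simple toric eigenvector, the remark after the definition of ``simple toric eigenvector'' tells us that $(1, f_1(x,z^*) - a^{(j)}_1, \ldots, f_n(x,z^*) - a^{(j)}_n)$ is a smooth point of $\{R_{\CC_0,\ldots,\CC_n} = 0\}$ for generic $a^{(j)}$, and the eigenvector $x^*$ is the unique common toric solution ``at infinity'' in the relevant sense. The key identity I would invoke is that, at such a smooth point, the gradient of $R_{\CC_0,\ldots,\CC_n}$ with respect to the coefficients $b_{0,\gamma}$ of $h_0$ evaluates (up to scalar) to the monomial vector $(1, (x^*)^{e_1}, \ldots, (x^*)^{e_n}) = (1, x_1^*, \ldots, x_n^*)$ — this is the standard ``Poisson/eigenvector'' interpretation of resultant derivatives, analogous to how $\partial R / \partial b_{0,\gamma}$ picks out $(x^*)^\gamma$. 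Concretely, ${\cal Q}_{x_i, a^{(j)}}(z^*) = \sum_{\gamma \in \CC_0} [x_i]_\gamma \, \frac{\partial R}{\partial b_{0,\gamma}}(\ldots)$ where $[x_i]_\gamma$ is $1$ if $\gamma = e_i$ and $0$ otherwise, so ${\cal Q}_{x_i, a^{(j)}}(z^*) = \frac{\partial R}{\partial b_{0,e_i}}(\ldots)$, which equals $c_j \cdot x_i^*$ for some nonzero scalar $c_j$ depending on $j$ but not $i$. Thus $Q(z^*) = x^* \cdot (c_1, \ldots, c_n)$ has rank one with column space spanned by $x^*$.

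Then I would finish as follows. From $U(z)^{-1} = D(z)\,Q(z)^{-1}$ (valid as rational matrices), multiply by $\det Q(z)$ to get $\det Q(z) \cdot U(z)^{-1} = D(z)\,\operatorname{adj}(Q(z))$. Now $\operatorname{adj}(Q(z))$ is a polynomial (entrywise rational, actually polynomial in the entries of $Q$) matrix, and at $z = z^*$ we have $\operatorname{rank} Q(z^*) = 1 < n$ (for $n \geq 2$), so $\operatorname{adj}(Q(z^*)) $ has the property that its columns lie in $\ker Q(z^*)^{\perp}$... more precisely, $Q(z^*)\operatorname{adj}(Q(z^*)) = \det Q(z^*) I = 0$, so every column of $\operatorname{adj}(Q(z^*))$ lies in $\ker Q(z^*)$. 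Since $Q(z^*) = x^*(c_1,\ldots,c_n)$ has rank one, $\ker Q(z^*)$ is the hyperplane $\{v : (c_1,\ldots,c_n)\cdot v = 0\}$ — that is not what I want; I want the image. Let me instead use the row space: $\operatorname{adj}(Q(z^*))\,Q(z^*) = 0$ too, so every row of $\operatorname{adj}(Q(z^*))$ is in the left kernel of $Q(z^*)$, i.e. orthogonal to the column space $\langle x^*\rangle$. Hmm — the right statement is obtained by noting $U(z)^{-1} = D(z)Q(z)^{-1}$ and $Q(z)^{-1} = \operatorname{adj}(Q(z))/\det Q(z)$, so the residue of $U(z)^{-1}$ at $z^*$ (its leading Laurent coefficient, since $z^*$ is a simple pole of $\det Q(z)^{-1}$... no, $\det Q(z^*)\neq 0$). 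I realize the cleanest formulation: evaluate $U(z^*)^{-1}$ directly — but $U(z^*)$ itself is singular (it has a pole from ${\cal R}(z^*)=0$), so ``$U(z^*)^{-1}$'' must mean $\lim_{z\to z^*} U(z)^{-1}$ suitably normalized, or the value of the (entrywise finite) matrix $U(z)^{-1}$ at $z^*$. Since $U(z)^{-1} = D(z)Q(z)^{-1}$ and $D(z^*) = 0$ (as ${\cal R}(z^*)=0$) while $Q(z^*)^{-1}$ has a pole of order one (since $\det Q(z^*)$... no, $\det Q(z^*) \neq 0$ so $Q(z^*)^{-1}$ is finite — but then $\det Q(z^*) = \det(x^*(c_1,\ldots,c_n)) = 0$, contradiction). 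So in fact $\det Q(z^*) = 0$, meaning $Q(z)^{-1}$ has a pole at $z^*$, and $D(z)$ has a zero of order one there; the product $U(z)^{-1} = D(z)Q(z)^{-1}$ is finite at $z^*$ precisely when the pole order of $Q(z)^{-1}$ is one. The hard part — and the main obstacle — is exactly this bookkeeping: showing $\det Q(z)$ has a simple zero at $z^*$ so that $Q(z)^{-1}$ has a simple pole, then computing $U(z^*)^{-1} = \lim_{z\to z^*}D(z)Q(z)^{-1}$ and verifying it annihilates $x^*$. This follows because $\operatorname{adj}(Q(z^*))$ is rank one with image spanned by $x^*$ (every column in $\ker$ of the rank-$(n-1)$... wait, rank of $Q(z^*)$ is $1$, so $\operatorname{adj}$ vanishes when $n \geq 3$). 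So the argument genuinely needs: $\operatorname{rank} Q(z^*) = n-1$ generically in a neighborhood, degenerating; one uses that $z^*$ is a \emph{simple} eigenvalue and the hypothesis $\det Q(z)\not\equiv 0$ to conclude $\det Q(z)$ vanishes to order one, hence $\operatorname{adj}(Q(z))(z^*) \neq 0$ has rank one with column space $\langle x^*\rangle$ (from $Q(z^*)\operatorname{adj}(Q(z^*)) = 0$ and $\operatorname{rank} Q(z^*) = n-1$), and then $U(z^*)^{-1} = \lim D(z) Q(z)^{-1} = (\text{scalar}) \cdot D'(z^*)$-type computation gives a matrix whose columns are multiples of $x^*$... no, I need $U(z^*)^{-1}x^* = 0$, i.e. $x^*$ is in the \emph{right} kernel. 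Since $U(z^*)^{-1}$ is proportional to $\operatorname{adj}(Q(z^*))$ times the diagonal $D'$(z^*), and $\operatorname{adj}(Q(z^*))$ has \emph{row} space orthogonal to $x^*$ is wrong too — rather $\operatorname{adj}(Q)Q = 0$ gives row space of $\operatorname{adj}(Q(z^*))$ in left-kernel of $Q(z^*)$; and $Q(z^*)$ has rank $n-1$ with one-dimensional right kernel. I would identify that right kernel with $\langle x^*\rangle$ by a second resultant-derivative computation (derivatives w.r.t. the $b_{i,\gamma}$ for $i\geq 1$ give the other eigenvector coordinates), and conclude $\operatorname{adj}(Q(z^*))$ has rows spanning the left kernel, columns spanning... and finally that $U(z^*)^{-1}x^* = 0$ because multiplying the adjugate-type matrix by the right-kernel vector of $Q(z^*)$ returns zero up to the scalar from $\det$. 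I expect this last sign/kernel-side bookkeeping, together with pinning down exactly which resultant-derivative identity gives the eigenvector, to be where most of the work lies.
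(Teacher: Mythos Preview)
Your factorization $U(z)=Q(z)\,D(z)^{-1}$ with $D(z)=\operatorname{diag}\big({\cal R}(z){\cal S}_{a^{(j)}}(z)\big)$, and your identification that every column $Q_j(z^*)$ is a nonzero scalar multiple of $x^*$ (so $\operatorname{rank}Q(z^*)=1$), match the paper exactly. The gap is everything after that. You assert that $\det Q(z)$ vanishes to order one at $z^*$ and then run an adjugate argument under the hypothesis $\operatorname{rank}Q(z^*)=n-1$; both statements are wrong, and they directly contradict the rank-one fact you just established. Because $\operatorname{rank}Q(z^*)=1$, every $2\times 2$ minor of $Q$ vanishes at $z^*$; in invariant-factor language, $(z-z^*)$ divides all but one invariant factor of $Q(z)$ over $\C[z]$, so $\det Q$ vanishes to order at least $n-1$ and $\operatorname{adj}(Q(z^*))=0$ for $n\geq 3$. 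The closing attempt to realize $x^*$ as the \emph{right kernel} of $Q(z^*)$ is also misdirected: $x^*$ spans the column space of $Q(z^*)$, not its kernel.

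The paper sidesteps the adjugate entirely. For $U(z^*)^{-1}x^*=0$, it simply rewrites the factorization as the identity of rational vectors $U(z)^{-1}Q_j(z)={\cal R}(z){\cal S}_{a^{(j)}}(z)\,e_j$; at $z=z^*$ the right side vanishes while $Q_j(z^*)=c_j\,x^*$ with $c_j\neq 0$, giving the claim with no kernel bookkeeping. For the simple-zero statement, take determinants to get $\det U(z)^{-1}\cdot\det Q(z)={\cal R}(z)^n\prod_j{\cal S}_{a^{(j)}}(z)$, which has order exactly $n$ at $z^*$. The invariant-factor argument above gives $\operatorname{ord}_{z^*}\det Q\geq n-1$, and the first claim gives $\operatorname{ord}_{z^*}\det U^{-1}\geq 1$; these three constraints force $\operatorname{ord}_{z^*}\det U^{-1}=1$ and $\operatorname{ord}_{z^*}\det Q=n-1$.
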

\begin{proof}
If the matrix $Q(z) = ( {\cal Q}_{x_i,a^{(j)}}(z))_{i,j}$ is invertible, then so is $U(z) \in \C(z)^{n \times n }$. Indeed, Theorem \ref{thm:traceformula} implies $\det(U(z)) = \det(Q(z)) \cdot ({\cal R}(z)^n \cdot \prod_{j=1}^n {\cal S}_{a^{(j)}}(z))^{-1}$. For any $j$, we have
\[U(z) \cdot \begin{pmatrix}
0 \\ \vdots \\{\cal R}(z) \cdot {\cal S}_{a^{(j)}}(z) \\ \vdots \\ 0
\end{pmatrix} = \begin{pmatrix}
{\cal Q}_{x_1,a^{(j)}}(z) \\ \vdots \\ {\cal Q}_{x_j,a^{(j)}}(z) \\ \vdots \\ {\cal Q}_{x_n,a^{(j)}}(z)
\end{pmatrix}
\]
by Theorem \ref{thm:traceformula}. This is an equality of vectors of rational functions. We denote the right hand side by $Q_j(z)$. Left multiplying by $U(z)^{-1}$ and plugging in $z = z^*$ shows that $(z^*,Q_j(z^*))$ is an eigenpair of $U(z)^{-1}$. Here we use that $x^*$ is a simple toric eigenvector, so that $Q_j(z^*) \neq 0$. It remains to show that, as points in projective space $\PP^{n-1}$, we have $Q_j(z^*) = x^*$. For this, one adapts the proof of \cite[Lemma 3.9]{d2015poisson}. The important step requires \cite[Proposition 1.37]{d2013heights}. For brevity, we omit technicalities and leave the details to the reader. 

To see that $z^*$ is a simple zero of $\det U(z)^{-1}$, we start from the identity 
\begin{equation} \label{eq:detform}
    \det U(z)^{-1} \cdot \det Q(z) = {\cal R}^n(z) \cdot \prod_{j=1}^n {\cal S}_{a^{(j)}}(z). 
\end{equation}
We have established that $\det U(z)^{-1} = c_1 (z-z^*)^\kappa + O((z-z^*)^{\kappa+1})$ near $z = z^*$ for some $c_1 \in \C \setminus \{0\}$ and $\kappa > 0$. Moreover, since ${\cal S}_{a^{(j)}}(z^*) \neq 0$ and $z^*$ is a simple zero of ${\cal R}(z)$, the right hand side equals $c_3 (z-z^*)^n + O((z-z^*)^{n+1})$ for some $c_3 \in \C \setminus \{0\}$. Since $Q_j(z^*) = x^* \in \PP^n$ for all $j = 1, \ldots, n$, we know that ${\rm rank}(Q(z^*)) = 1$. Therefore, $(z-z^*)$ divides all but one of the invariant factors of $Q(z)$, viewed as a matrix over $\C[z]$. It follows that $\det Q(z) = c_2 (z-z^*)^\lambda + O((z-z^*)^{\lambda + 1})$ for $\lambda \geq n-1$. Since $\kappa + \lambda = n$ by \eqref{eq:detform}, we must have $\kappa = 1, \lambda = n-1$, which concludes the proof.
%Let $h_i = \sum_{\alpha \in \CC_i} b_{i,\alpha} x^{\alpha}$ be a generic polynomial with support $\CC_i$, and for $i = 1, \ldots, n$, fix $\beta_i \in \CC_i$ such that the coefficient $b_{i,\beta_i}$ of $f_i - a_i^{(j)}$ is nonzero. Without loss of generality, we may assume these coefficients are equal to $1$. We also set $\beta_{0} = 0$. By \cite[Proposition 1.37]{}, there exists a positive integer $\kappa$ and polynomials $G_0, \ldots, G_n$ such that 
%\[ \left(\prod_{i=0}^n b_{i,\beta_i} \right)^\kappa \cdot R_{\CC_0, \CC_1, \ldots, \CC_n} \, \, = \, \, G_0 h_0 + \cdots + G_n h_n.\]
%Taking partial derivatives with respect to $b_{0,e_k}, k = 1, \ldots, n$, plugging in $h_0 = 1, h_i = f_i - a_i^{(j)}, z = z^*$ and recalling that \[{\cal Q}_{x_k,a^{(j)}}(z) = \frac{\partial R_{\CC_0,\CC_1,\ldots, \CC_n}}{\partial b_{0,e_k}}(1, f_1-a_1^{(j)},\ldots, f_n-a_n^{(j)}),\] 
%we obtain the identity $Q_j(z^*) = x^* \in \PP^{n-1}$.
\end{proof}
Theorem \ref{thm:REP} shows that the matrix $U(z)$ reduces our problem to a rational eigenvalue problem of the form $U(z)^{-1} \cdot x = 0$, which can be solved using contour integration techniques from Section \ref{sec:2}. We proceed by discussing how to do this in practice. 

The $k$-th \emph{moment matrix} $A_k$ is given by 
\[ A_k \, \, = \, \,  \frac{1}{2 \pi \sqrt{-1}} \, \oint_{\partial \Omega} z^k \, U(z) \, {\rm d} z, \qquad k = 0, 1, 2, \ldots . \]
To find the poles of $U(z)$, these matrices are arranged into two block Hankel matrices $B_0, B_1$, on which we perform a sequence of standard numerical linear algebra operations. This was explained in Section \ref{sec:2}. The rank of $B_0$ equals the number of eigenvalues inside $\partial \Omega$. We emphasize that when $T(x,z) = T(z)$ represents a PEP, the matrix $U(z)$ is given by $T(z)^{-1} \cdot \begin{pmatrix}a^{(1)} & \cdots & a^{(\ell)} \end{pmatrix}^\top$ and our moment matrices $A_k$ coincide with those used in Beyn's algorithm. 
In practice, we approximate the moment matrices $A_k$ using numerical integration techniques. We assume that $\partial \Omega$ is parameterized by a differentiable map $\varphi: [0,2\pi) \rightarrow \C$, so that the $k$-th moment matrix can be written as
\[ A_k \, \, = \, \, \frac{1}{2\pi \sqrt{-1}} \, \int_0^{2\pi}U(\varphi(t)) \, \varphi^{\prime}(t)\, \varphi^k(t) \, {\rm d} t .\]
A standard approach to evaluate this integral numerically is to use the \emph{trapezoidal rule} with $N+1$ equidistant nodes $t_\ell = \frac{2\pi \ell}{N}$, $\ell=0,\ldots,N$. This gives the approximation $A_{k,N} \approx A_k$:
\begin{equation} \label{eq:int_approx} 
    A_{k,N} \, = \, \frac{1}{\sqrt{-1} \, N} \sum_{\ell=0}^{N-1}  U(\varphi(t_\ell)) \, \varphi^{\prime}(t_\ell) \, \varphi^k(t_\ell) . 
\end{equation} 
Hence, we need to evaluate $U(z)$ for $z = \varphi(t_\ell), \ell = 0, \ldots, N-1$. We do this efficiently, without explicitly constructing $U(z)$, using \emph{homotopy continuation methods}. Here, we briefly review the basics. For a complete introduction, the reader is referred to the textbook \cite{sommese2005numerical}.

For fixed $t \in [0, 2\pi)$, the trace vectors ${\rm Tr}_{{\cal A}_0}(I_{a^{(j)}})_{|z = \varphi(t)}$ are obtained by summing over the solutions to the system of polynomial equations given by $F(x,t) = 0$, where
\[ F(x,t) \, = \, T(x,\varphi(t)) \cdot x - a^{(j)}(x) \, = \, \begin{pmatrix} f_1(x,\varphi(t)) - a^{(j)}_1(x) \\ \cdots \\ f_n(x,\varphi(t)) - a^{(j)}_n(x) \end{pmatrix}. \] 
By Assumption \ref{assum:mv}, there are $\delta$ solutions. We think of these solutions as paths $x^{(m)}: [0,2 \pi) \rightarrow \C^n$ satisfying $F(x^{(m)}(t),t) = 0$, $m = 1, \ldots, \delta$. These paths are described by a system of ordinary differential equations called the \emph{Davidenko equation}:
\begin{equation} \label{eq:davidenko}
     \frac{{\rm d} F(x(t),t)}{{\rm d} t} \, = \, J_F(x(t),t) \cdot \frac{{\rm d} x}{{\rm d} t} + \frac{\partial F(x(t),t)}{\partial t} \, = \,  0 ,
\end{equation}
where $J_F$ is the Jacobian matrix whose $(j,k)$ entry is $\frac{\partial f_j}{\partial x_k}$. Each of the paths is uniquely determined by an initial condition specifying $x^{(m)}(t_0) = x^{(m)}(0)$. For computing the trace, we need to evaluate the paths at the discrete points $t_\ell = \frac{2 \pi \ell}{N}$. The situation is illustrated in Figure \ref{fig:homotopy}, where $\partial \Omega$ is the unit circle in the complex plane, parameterized by $\varphi(t) = \cos(t) + \sqrt{-1} \cdot \sin(t)$. This is drawn in orange. At each of the points $\varphi(t_\ell)$, represented as black dots on $\partial \Omega$, there are $\delta = 3$ solutions $x^{(m)}(t_\ell), m = 1, \ldots, 3$ to $F(x,t_\ell) = 0$. This is illustrated with a dashed line for one choice of $\ell$.

\begin{figure}
    \centering
    \input{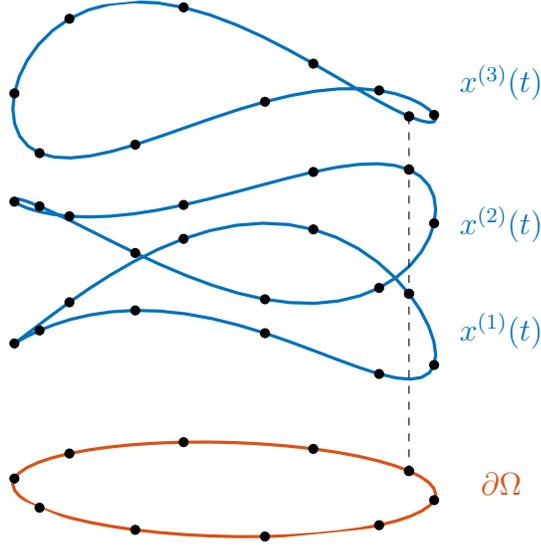}
    \caption{An illustration of the paths $x^{(m)}(t), m = 1, \ldots, \delta$ and the discretized paths $x^{(m)}(t_\ell), \ell = 0, \ldots, N$ for $\delta = 3$ and $N = 9$.}
    \label{fig:homotopy}
\end{figure}

Approximating $x^{(m)}(t_\ell)$ can be done using numerical techniques for solving the Davidenko equation \eqref{eq:davidenko}. An example is the Euler method, which approximates $x^{(m)}(t_\ell)$ from $x^{(m)}(t_{\ell-1})$ using finite differences. An important remark is that, in our scenario, we have an implicit equation $F(x(t),t) = 0$ satisfied by the solution paths. This allows us, in every step, to refine an approximation $\widetilde{x^{(m)}(t_\ell)}$ for $x^{(m)}(t_\ell)$ using \emph{Newton iteration} on $F(x,t_\ell) = 0$. With a slight abuse of notation, we also write $x^{(m)}(t_\ell)$ for the numerical approximation of $x^{(m)}(t_\ell)$ obtained \emph{after} this refinement. The path values $x^{(m)}(t_\ell)$ are used to evaluate the $i$-th column ${\rm Tr}_{{\cal A}_0}(I_{a^{(j)}})$ of $U(\varphi(t_\ell))$, by plugging $a = a^{(j)}$ and $\xi = x^{(m)}(t_\ell)$ into \eqref{eq:tracereminder}.

We summarize this discussion in Algorithm \ref{alg:homotopy} and provide some clarifying remarks. We start by pointing out that Assumption \ref{assum:mv} guarantees that for all but finitely many values $z \in \C$, the system of equations $T(x,z) \cdot x - a^{(j)}(x) = 0$ has $\delta$ isolated solutions $x \in \C^n$, each with multiplicity one. We assume that the contour $\partial \Omega$ misses these finitely many $z$-values, which makes sure that the solution paths $x^{(m)}(t)$ do not \emph{cross}, i.e.~$x^{(m)}(t) \neq x^{(m')}(t)$ for $m \neq m'$. This can be realized, if necessary, by slightly enlarging $\Omega$. In line \ref{line:start} of Algorithm \ref{alg:homotopy}, the starting points $x^{(m)}(t_0)$ are computed. This can be done using any numerical method for solving polynomial systems. Recent eigenvalue methods are described in \cite{bender2021yet}. In case of many variables, it is favorable to use the \emph{polyhedral homotopies} introduced in \cite{huber1995polyhedral}. Line \ref{line:predict} is often called the \emph{predictor} step. Our presentation assumes a first order predictor, which uses only $x^{(m)}(t_{\ell-1})$ to compute an approximation for $x^{(m)}(t_\ell)$. In practice, one sometimes uses the path values at $t_{\ell-2}, t_{\ell-3}, \ldots$ for more accurate results. It is important to remark that when $N$ is too small, the step size $2\pi/N$ may be too large to track the paths reliably. A bad approximation in line \ref{line:predict} may cause the Newton iteration in line \ref{line:correct} to converge to a \emph{different} path. This phenomenon is called \emph{path jumping}. To remedy this, one could take some `extra' steps between $t_{\ell-1}$ and $t_\ell$. Recent studies in the direction of \emph{adaptive stepsize} algorithms are \cite{telen2020robust,timme2021mixed}. Details are beyond the scope of this paper. In our implementation, the algorithm in \cite{timme2021mixed} decides how many steps to take between $t_{\ell-1}$ and $t_\ell$. Line \ref{line:correct} is called the \emph{corrector} step, and Algorithm \ref{alg:homotopy} is a blueprint for a \emph{predictor-corrector} scheme, see e.g.~\cite[Alg.~2.1]{telen2020robust}.
\begin{algorithm}[h!]
\caption{Evaluating the $j$-th column of $U(z)$ at $z = \varphi(t_\ell), \ell = 0, \ldots, N-1$}  \label{alg:homotopy}
\begin{algorithmic}[1]
\State Compute $\delta$ \emph{start solutions} $x^{(m)}(t_0), m = 1, \ldots, \delta$ satisfying $F(x^{(m)}(t_0), t_0) = 0$ \label{line:start}
\State ${\rm Tr}_{{\cal A}_0}(I_{a^{(j)}})_{|z = \varphi(t_0)} = \left (\sum_{m = 1}^\delta (x^{(m)}(t_0))_i \right )_{i = 1, \ldots, n}$
\State {$\ell \gets 0$}
\While {$\ell \leq N$}
    \For {$m = 1, \ldots, \delta$}
        \State $\widetilde{x^{(m)}(t_\ell)} \gets $ an approximation for $x^{(m)}(t_\ell)$ obtained from $x^{(m)}(t_{\ell-1})$ \label{line:predict}
        \State $x^{(m)}(t_\ell) \gets $ refine $\widetilde{x^{(m)}(t_\ell)}$ using Newton iteration \label{line:correct}
    \EndFor
    \State ${\rm Tr}_{{\cal A}_0}(I_{a^{(j)}})_{|z = \varphi(t_\ell)} = \left (\sum_{m = 1}^\delta (x^{(m)}(t_\ell))_i \right )_{i = 1, \ldots, n}$
    \State $\ell \gets \ell + 1$
\EndWhile
\end{algorithmic}
\end{algorithm}

\section{Complexity} \label{sec:5}
In this section, we discuss the complexity of the contour integration algorithm presented in Section \ref{sec:4}. We split the algorithm into two major steps: 
\begin{enumerate}
    \item Evaluate the moment matrices $A_0, \ldots, A_{2M-1}$.
    \item Extract the eigenvalues from these moment matrices.
\end{enumerate}

In Step 2, one constructs the matrices $B_0, B_1$ from \eqref{eq:Bi}. These are of size $M \cdot n$, and $M$ is chosen such that $M \cdot n \geq \delta(\Omega)$, where $\delta(\Omega)$ is the number of eigenvalues inside $\Omega$. The eigenvalues are then extracted from $B_0, B_1$ by computing an SVD, see Section \ref{sec:2}. The cost is $O(M^3 \cdot n^3)$. The most favorable situation for our method is when $\delta(\Omega) \approx M \cdot n \ll \hat{\delta}$. 

Step 1 uses numerical homotopy continuation. Continuing to work under Assumption \ref{assum:mv}, it requires tracking $n \cdot \delta = n \cdot {\rm MV}(P_1, \ldots, P_n)$ solution paths. The homotopy is used to evaluate $U(\varphi(t_\ell))$ as discussed in Section \ref{sec:4}. The moment matrices $A_k$ are then approximated via \eqref{eq:int_approx}. In our analysis, we assume that the number of nodes $N$ is fixed. Moreover, we ignore the complexity of computing $A_{k,N}$ from $U(\varphi(t_\ell))$, as it is negligible compared to the cost of tracking our $n \cdot \delta$ paths. 

The number $n \cdot \delta$ should be compared to the total number of eigenvalues of $T(x,z)$, denoted $\hat{\delta}$. This is the number of paths tracked in the naive approach of computing all eigenpairs and discarding those for which $z \notin \Omega$. However, we warn the reader that one cannot straightforwardly draw conclusions about the computation time by simply comparing $n \cdot \delta$ and $\hat{\delta}$. For instance, it might be favorable to solve $n$ problems with $\delta < \hat{\delta}$ solutions rather than one problem with $\hat{\delta}$ solutions, even if $n \cdot \delta > \hat{\delta}$.
Below, we compute the number of paths $n \cdot \delta$ for two families of PEPv's. The first one is inspired by Theorems \ref{thm:unmixed} and \ref{thm:pyramid}, where $d_1 = \cdots = d_n$. The second one is a family of systems of rational function equations from \cite{claes2022linearizable}, which can be solved using a slight modification of our method.

\subsection{Unmixed, dense equations}
We consider the case where $T(x,z) \cdot x = (f_1(x,z), \ldots, f_n(x,z))^\top$ comes from the polynomial system $f_1 = \cdots = f_n = 0$, where each $f_i$ is homogeneous of degree $d+1$ in $x$, and of degree $e$ in $z$. We assume that $x_j^{d+1}, j = 1, \ldots, n$ appear in each of the $f_i$. First, we also choose the polynomials $a_i(x) \in \C[x]_d$ such that $x_j^d, j = 1, \ldots, n$ appear in each of them. This is the situation of Theorem \ref{thm:unmixed}. We compute the numbers $n \cdot \delta$ and $\hat{\delta}$ for this setup. 

\begin{proposition} \label{prop:rat1}
Let $f_1, \ldots, f_n, a_1, \ldots, a_n$ be as in Theorem \ref{thm:unmixed}. We have
\[ n \cdot \delta = n \cdot ((d+1)^n - d^n), \quad \hat{\delta} = e \cdot n \cdot  (d+1)^{n-1}.\]
\end{proposition}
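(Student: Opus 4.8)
The plan is to compute the two numbers separately, each as a count of solutions to an appropriate polynomial system. For $\delta$, recall that Assumption~\ref{assum:mv} gives $\delta = {\rm MV}(P_1, \ldots, P_n)$, where $P_i = {\rm Conv}(\CC_i)$ is the Newton polytope of $f_i - a_i$. Under the hypotheses of Theorem~\ref{thm:unmixed}, each $P_i$ is the common polytope $P_i = {\rm cl}((d+1)\cdot P_0 \setminus (d \cdot P_0))$, where $P_0$ is the standard simplex. Since all $P_i$ coincide, the mixed volume collapses to the normalized volume: ${\rm MV}(P_1,\ldots,P_n) = n! \, {\rm vol}(P_1)$. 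Now ${\rm vol}((d+1)P_0) = (d+1)^n / n!$ and ${\rm vol}(d \cdot P_0) = d^n/n!$, and the closure operation does not change the volume, so $n!\,{\rm vol}(P_1) = (d+1)^n - d^n$. Multiplying by $n$ (one homotopy per column of $U(z)$) gives $n \cdot \delta = n\cdot((d+1)^n - d^n)$.

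For $\hat\delta$, the total number of eigenvalues of $T(x,z)$, I would argue that this equals $\deg {\cal R}(z)$, since by construction the eigenvalues with toric eigenvector are exactly the roots of ${\cal R}(z) = R_{\A_1,\ldots,\A_n}(f_1,\ldots,f_n)$, counted with multiplicity (this is the count of paths in the naive approach). The degree of ${\cal R}(z)$ in $z$ is obtained by multiplying the degree of the sparse resultant $R_{\A_1,\ldots,\A_n}$ in the coefficients of the $i$-th polynomial by $e$ (the $z$-degree of those coefficients), and summing over $i$. By the standard formula for the degree of the sparse (here, classical, since the $\A_i$ are all the monomials of degree $d+1$) resultant, $R_{\A_1,\ldots,\A_n}$ has degree $\prod_{k \neq i} (d+1) = (d+1)^{n-1}$ in the coefficients of $f_i$ — this is the mixed volume of the $n-1$ polytopes $(d+1)P_0$ omitting the $i$-th. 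Hence $\deg_z {\cal R} = \sum_{i=1}^n e\,(d+1)^{n-1} = e\cdot n\cdot (d+1)^{n-1}$, which is $\hat\delta$.

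The main obstacle is the second computation: one needs to be careful that $\hat\delta$ is genuinely $\deg_z {\cal R}(z)$ and not something smaller — i.e. that no spurious drop in degree occurs when specializing the generic resultant coefficients $b_{i,\alpha}$ to the actual polynomials $c_{i,\alpha}(z)$. Under the genericity hypotheses in force (the $f_i$ are generic of the stated shape, with $x_j^{d+1}$ appearing), the leading $z$-coefficient of ${\cal R}(z)$ is a nonzero polynomial in the top-degree coefficients of the $f_i$, so it does not vanish; I would invoke this together with the homogeneity/multidegree property of the sparse resultant (e.g.\ \cite[Chapter 7]{cox2006using}) to pin down $\deg_z {\cal R}$ exactly. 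A secondary point worth a sentence is justifying that "number of eigenvalues" should be interpreted as $\deg_z {\cal R}(z)$ rather than, say, the number of distinct eigenvalues — this matches the convention that $\hat\delta$ counts homotopy paths in the naive solver, consistent with how $\delta$ was defined via multiplicities.
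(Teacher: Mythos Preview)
Your computation of $\delta$ is exactly the paper's: all $P_i$ coincide with ${\rm cl}((d+1)P_0\setminus dP_0)$, so Kushnirenko's theorem gives $\delta$ as the normalized volume $(d+1)^n - d^n$.

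For $\hat\delta$ you take a different route. The paper dispatches this in one line by applying the multihomogeneous B\'ezout theorem directly to the system $f_1=\cdots=f_n=0$ on $\PP^{n-1}\times\PP^1$: with bidegrees $(d+1,e)$ for each equation, the coefficient of $\alpha^{n-1}\beta$ in $((d+1)\alpha+e\beta)^n$ is $n\,e\,(d+1)^{n-1}$. Your argument instead computes $\deg_z{\cal R}(z)$ via the multihomogeneity of the classical resultant in each group of coefficients, obtaining $\sum_{i=1}^n e\cdot(d+1)^{n-1}$. This is correct and is in some sense the ``dual'' computation (projecting onto the $z$-line rather than counting points in the product), but it comes with the extra baggage you identify: you must argue that the generic degree is actually attained after specialization, and you must justify the identification $\hat\delta=\deg_z{\cal R}(z)$ (which is fine generically but needs a word). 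The paper's approach sidesteps both issues by counting solutions to $f_1=\cdots=f_n=0$ directly, which is also more naturally aligned with the interpretation of $\hat\delta$ as the number of homotopy paths in the naive solver.
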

\begin{proof}
By the multihomogeneous version of B\'ezout's theorem, the total number of eigenvalues, i.e., solutions to $f_1 = \cdots = f_n = 0$, is $\hat{\delta} = e \cdot n \cdot (d+1)^{n-1}$. To compute $\delta$, consider the polytope $P = P_1 = \cdots = P_n \subset \R^n$, given by \eqref{eq:dense}, with $d_i = d$. By Kushnirenko's theorem, the number $\delta$ is the lattice volume of $P$. This is given by $\delta = (d+1)^n - d^n$. 
\end{proof}
It follows that, for large $d$, the ratio $(n \cdot \delta)/\hat{\delta}$ tends to $n/e$. Hence, our method tracks significantly fewer solution paths when $e \gg n$. We note that, for small $d$, this conclusion is pessimistic. For instance, if $d = 2$, we find that $(n \cdot \delta)/\hat{\delta} \approx 2/e$. 

A smaller number of paths $n \cdot \delta$ is obtained when the $a_i(x)$ are chosen as in Theorem \ref{thm:pyramid}. The computation is similar to the proof of Proposition \ref{prop:rat1}, noting that the lattice volume of a pyramid of lattice height 1 equals the $(n-1)$-dimensional lattice volume of its base.
\begin{proposition} \label{prop:rat2}
Let $f_1, \ldots, f_n, a_1, \ldots, a_n$ be as in Theorem \ref{thm:pyramid}. We have
\[ n \cdot \delta =  n \cdot (d+1)^{n-1}.\]
\end{proposition}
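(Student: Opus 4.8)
The plan is to follow the template of Proposition \ref{prop:rat1}, replacing Kushnirenko's computation for the ``dense'' polytope by the corresponding computation for the pyramid polytope arising in Theorem \ref{thm:pyramid}. The quantity $n \cdot \delta$ is the only new thing to compute, since $\delta = {\rm MV}(P_1, \ldots, P_n)$ under Assumption \ref{assum:mv}, and here $P_1 = \cdots = P_n$, so $\delta = {\rm MV}(P_1, \ldots, P_1) = n! \cdot {\rm vol}(P_1) = {\rm vol}_{\Z}(P_1)$, the normalized (lattice) volume of $P_1$. Alternatively one can invoke Kushnirenko's theorem directly, exactly as in the proof of Proposition \ref{prop:rat1}, since $I_a$ has all its Newton polytopes equal to $P_1$.

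So the heart of the proof is: compute the lattice volume of $P_1$. Recall from the discussion preceding Theorem \ref{thm:pyramid} that $P_1$ is a pyramid of lattice height $1$ over the dilated simplex $\Delta := (d+1)\cdot{\rm Conv}(e_1, \ldots, e_n) \subset \R^n$, where the apex is the lattice point $\beta$ (the exponent of the monomial $x^\beta$ appearing in each $a_i$), and $\beta$ lies at lattice distance $1$ from the affine hyperplane spanned by $\Delta$ (namely $\{\,x_1 + \cdots + x_n = d+1\,\}$), since $\deg(a_i) = d$. The first step is therefore to recall the elementary fact that the normalized volume of a pyramid of lattice height $1$ over a lattice polytope $Q$ lying in a rational hyperplane equals the $(n-1)$-dimensional normalized volume of $Q$ computed inside that hyperplane (with respect to the induced lattice). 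Applying this with $Q = \Delta$: the base $\Delta$ is the $(d+1)$-fold dilate of the standard simplex, whose $(n-1)$-dimensional normalized volume is $1$, so ${\rm vol}_{\Z}^{(n-1)}(\Delta) = (d+1)^{n-1}$. Hence $\delta = (d+1)^{n-1}$ and $n \cdot \delta = n \cdot (d+1)^{n-1}$, which is the claim.

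The main (and only) obstacle is making the ``pyramid of lattice height one'' volume statement precise and clean, i.e. being careful that the apex $\beta$ really is at lattice distance exactly $1$ from ${\rm aff}(\Delta)$ and that Assumption \ref{assum:lattice} is what guarantees that the relevant lattice in the hyperplane $\{x_1+\cdots+x_n = d+1\}$ is the standard one (so that the normalized $(n-1)$-volume of the unit simplex there is indeed $1$). This is essentially already spelled out in the remark following Proposition \ref{prop:rat2} in the surrounding text (``the lattice volume of a pyramid of lattice height $1$ equals the $(n-1)$-dimensional lattice volume of its base''), so in the write-up I would simply cite that remark and the proof of Proposition \ref{prop:rat1}, and keep the argument to two sentences. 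Concretely, the proof reads:

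\begin{proof}
As in the proof of Proposition \ref{prop:rat1}, Assumption \ref{assum:mv} gives $\delta = \dim_K K[x,x^{-1}]/I_a = {\rm MV}(P_1, \ldots, P_n)$, and since $P_1 = \cdots = P_n$ this mixed volume equals the lattice volume of $P_1$ (equivalently, apply Kushnirenko's theorem). By construction $P_1$ is a pyramid of lattice height $1$ over the base $(d+1) \cdot {\rm Conv}(e_1, \ldots, e_n)$, so its lattice volume equals the $(n-1)$-dimensional lattice volume of that base. Under Assumption \ref{assum:lattice}, the latter equals $(d+1)^{n-1}$, whence $\delta = (d+1)^{n-1}$ and $n \cdot \delta = n \cdot (d+1)^{n-1}$.
\end{proof}
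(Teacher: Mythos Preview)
Your proof is correct and follows the same approach as the paper, which simply remarks that the computation is as in Proposition~\ref{prop:rat1} with the lattice volume of the pyramid of height~$1$ equaling the $(n-1)$-dimensional lattice volume of its base $(d+1)\cdot{\rm Conv}(e_1,\ldots,e_n)$. One small correction: the pyramid remark you cite actually \emph{precedes} Proposition~\ref{prop:rat2} in the text rather than following it.
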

Propositions \ref{prop:rat1} and \ref{prop:rat2} lead us to conclude that the methods presented in this paper are effective only when the degree in the eigenvalue variable is large. This situation arises, for instance, when the PEPv comes from a polynomial approximation of a set of equations that depends transcendentally on $z$. We will show an example in Section \ref{exp:3}.
\subsection{Rational functions}
We now discuss an example where the entries of the matrix $T(x,z)$ are homogeneous \emph{rational} functions in $x$. More precisely, consider a rational map $T: \PP^{n-1} \times \C \dashrightarrow \C^{n \times n}$ of the form
\begin{equation} \label{eq:rationalT}
    T(x,z) \, = \, T_0(z) + \frac{r_1(x)}{s_1(x)} \, T_1 \, + \, \cdots \, + \, \frac{r_m(x)}{s_m(x)} \,T_m, 
\end{equation} 
where $T_0(z) = A + z \cdot B$ with $A, B \in \C^{n \times n}$, and $r_i(x), s_i(x)$ are linear forms in $x$. The associated \emph{rational eigenvalue problem with eigenvector nonlinearities} (REPv) is 
\begin{equation} \label{eq:ratprob} \text{find $(x^*,z^*) \in (\PP^{n-1} \setminus V_{\PP^{n-1}}(s_1 \cdots s_m)) \times \C$ such that $T(x^*,z^*) \cdot x^* = 0$.}
\end{equation}
Here we use the standard notation $V_X(f) = \{ x \in X ~|~ f(x) = 0 \}$. The problem \eqref{eq:ratprob} was studied in \cite{claes2022linearizable}. We here discuss how our methods can be used to solve this REPv. We point out that, in this case, the problem cannot be turned into a PEPv by clearing denominators, as this typically introduces infinitely many spurious eigenvectors.

The rows of $T$ are homogeneous of degree $d = 0$ in $x$. Consistently with our approach for PEPv's, we consider the equations $T(x,z) \cdot x - a = (f_1 - a_1, \ldots, f_n - a_n)^\top =  0$, where $a = (a_1, \ldots, a_n)^\top \in \C^n$ is a generic vector of complex constants. The matrix $U(z)$ from \eqref{eq:U} is constructed by summing over the $\delta$ solutions. The following theorem predicts $\delta$.

\begin{theorem} \label{thm:nosolsrat}
For $T$ as in \eqref{eq:rationalT} and generic $z \in \C, a \in \C^n$, the system of equations $T(x,z) \cdot x - a = 0$ has at most $\delta$ isolated solutions in $(\C^n \setminus V_{\C^n}(s_1 \cdots s_m)) \times \C$, with 
\[ \delta = \sum_{k=0}^{\min(n-1,m)} \begin{pmatrix} n-1 \\ k
\end{pmatrix} \cdot \begin{pmatrix} m \\ k
\end{pmatrix}. \]
\end{theorem}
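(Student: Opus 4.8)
The plan is to count the number of solutions by a Bézout-type argument adapted to the rational structure of $T$. First I would clear denominators in a controlled way: multiplying the $i$-th equation $f_i - a_i = 0$ by the product $s_1(x) \cdots s_m(x)$ of all denominator linear forms yields a polynomial system $g_i(x,z) = 0$ on $\C^n$, where $g_i = (s_1 \cdots s_m)\cdot\left(T_0(z)\cdot x\right)_i + \sum_{k=1}^m r_k(x) \left(\prod_{j\neq k} s_j(x)\right)(T_k \cdot x)_i - a_i \cdot s_1(x)\cdots s_m(x)$. Each $g_i$ has degree $m+1$ in $x$, but it is crucial to observe that its homogeneous top-degree part in $x$ (degree $m+1$) is \emph{independent of} $z$ and of $a$, namely $\sum_{k=1}^m r_k(x)\left(\prod_{j\neq k}s_j(x)\right)(T_k \cdot x)_i - a_i\, s_1(x)\cdots s_m(x)$ with the $T_0$ contribution entering only in degree $m$ via $z$. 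This means the solutions at infinity are fixed, and the count of affine solutions avoiding $V(s_1\cdots s_m)$ is what we need. The claimed formula $\delta = \sum_{k=0}^{\min(n-1,m)}\binom{n-1}{k}\binom{m}{k}$ strongly suggests a \emph{multihomogeneous} Bézout count rather than a naive one, so the real work is setting up the right bigraded (or multigraded) structure.

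The key step is to recognize the combinatorial identity $\sum_{k}\binom{n-1}{k}\binom{m}{k}$ as a coefficient extraction: it equals the coefficient of an appropriate monomial in a product of the form $(1+s)^{n-1}(1+t)^m$ after a suitable substitution, i.e. it is the permanent-like mixed term that arises from a multihomogeneous Bézout number with two groups of variables. Concretely, I would dehomogenize in a chart (say $x_n = 1$), introduce $n-1$ affine coordinates, and then exhibit the system $T(x,z)\cdot x - a = 0$ as bilinear in two blocks: one block being the original projective coordinates $x$ (appearing linearly in each $(T_k\cdot x)_i$ and in the denominators), and the other block being auxiliary variables $u_k = r_k(x)/s_k(x)$ subject to the linear relations $u_k\, s_k(x) = r_k(x)$. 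In the $(x,u)$ variables the defining equations become bilinear: $T_0(z)\cdot x + \sum u_k T_k\cdot x = a$ is linear in $x$ and linear in $u$, and each relation $u_k s_k(x) - r_k(x) = 0$ is bilinear in $(x, u_k)$. Applying the multihomogeneous Bézout theorem (or a Newton-polytope / mixed-volume computation on the corresponding product of simplices) to this bilinear system, then discarding the components lying on $\{s_1\cdots s_m = 0\}$ or at infinity, should produce exactly the stated sum over $k$, where $k$ counts how many of the $u_k$ are "active". I would verify the bookkeeping by checking the degenerate cases $m = 0$ (giving $\delta = 1$, the linear GEP case) and $m = 1$ (giving $\delta = 1 + (n-1) = n$), and cross-check against \cite{claes2022linearizable}.

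\textbf{Main obstacle.} The hard part will be showing the bound is \emph{exact} for generic $z$ and $a$ rather than merely an upper bound, i.e. ruling out loss of solutions to the excluded locus $V_{\C^n}(s_1\cdots s_m)$, to infinity, and to positive-dimensional components. For the multihomogeneous Bézout count to be attained one needs the bilinearized system to be a generic complete intersection with no solutions on the coordinate hyperplanes of the auxiliary chart; genericity of $a$ and $z$ should handle the "no spurious components" issue, but one must be careful that the substitution $u_k = r_k(x)/s_k(x)$ is a birational change of coordinates only away from $V(s_k)$, so solutions could a priori be created or destroyed at those walls. I would address this by a dimension/transversality argument: for generic $a$, the map $x \mapsto T(x,z)\cdot x$ restricted to the complement of the $s_k = 0$ walls is dominant with the expected generic fiber, and any solution of the cleared system on $\{s_k = 0\}$ forces $r_k(x) = 0$ too, cutting the dimension and contradicting genericity. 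Because the statement only claims "at most $\delta$", a clean upper bound via multihomogeneous Bézout on the bilinear reformulation already suffices, so I would present that argument first and remark that equality holds generically, deferring the sharpness details or citing \cite{claes2022linearizable} where the REPv count was established.
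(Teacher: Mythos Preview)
Your proposal is correct and follows essentially the same approach as the paper: introduce auxiliary variables $u_k = r_k(x)/s_k(x)$ (the paper calls them $\lambda_i$) subject to $s_k(x)u_k - r_k(x) = 0$, obtaining a bilinear polynomial system in $(x,u)$, and bound the solution count via a mixed-volume / multihomogeneous B\'ezout argument. The paper carries out the count by applying BKK to the Newton polytopes and expanding the mixed volume via multilinearity in the $m$ polytopes $\Delta_n \times L_i$, which produces exactly the sum $\sum_k \binom{m}{k}\binom{n-1}{k}$ you anticipated; your initial detour through clearing denominators is unnecessary but harmless, and your observation that only the ``at most'' direction is required is exactly how the paper proceeds.
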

\begin{proof}[Sketch of proof]
The system of rational function equations $T(x,z) \cdot x - a = 0$ is equivalent to the system of $n + m$ polynomial equations
\begin{equation}  \label{eq:rattopol}
(T_0 + \lambda_1 T_1 + \cdots + \lambda_n T_n) \cdot x - a = 0, \quad s_i(x) \lambda_i - r_i(x) = 0, i = 1, \ldots, m, \end{equation}
where $\lambda_1, \ldots, \lambda_m$ are new variables and $T_0 = T_0(z)$. The entries of $(T_0 + \lambda_1 T_1 + \cdots + \lambda_n T_n) \cdot x - a$ all have the same Newton polytope, denoted $P \subset \R^{m+n}$. The equation $s_i(x)\lambda_i - r_i(x)$ has Newton polytope $\Delta_n \times L_i$, where $\Delta_n = {\rm Conv}(e_1, \ldots, e_n) \subset \R^n$ and $L_i = {\rm Conv}(0,e_i) \subset \R^m$. By the BKK theorem, the number of isolated solutions to \eqref{eq:rattopol} is bounded by the mixed volume $\delta ={\rm MV}(P, \ldots, P, \Delta_n \times L_1, \ldots, \Delta_n \times L_m)$. Here $P$ is listed $n$ times. Multilinearity and symmetry of the mixed volume gives the equality
\[ \delta = \sum_{k=0}^m \begin{pmatrix}
m \\ k
\end{pmatrix}{\rm MV}(P,\ldots, P, \Delta_n, \ldots, \Delta_n, L_{k+1}, \ldots, L_m). \]
Since $\Delta_n$ has dimension $n-1$, all terms with $k > n-1$ are zero. It remains to show that for $k \leq \min(n-1,m)$, we have 
\[{\rm MV}(P,\ldots, P, \Delta_n, \ldots, \Delta_n, L_{k+1}, \ldots, L_m) = \begin{pmatrix}
n-1 \\ k
\end{pmatrix}.\]
This number counts solutions to $(T_0 + \lambda_1 T_1 + \cdots + \lambda_n T_n) \cdot x - a = 0$ after plugging in random values for $\lambda_{k+1}, \ldots, \lambda_m$ and replacing $x_{n-k+1}, \ldots, x_n$ by generic linear forms in $x_1, \ldots, x_{n-k}$. What is left is a system of $n$ equations in the variables $(x_1, \ldots, x_{n-k}, \lambda_1, \ldots, \lambda_k)$. It has at most $\begin{pmatrix}
n-1 \\ k
\end{pmatrix}$ solutions by the multihomogeneous version of B\'ezout's theorem.
\end{proof}
By \cite[Theorem 3.1]{claes2022linearizable}, the total number of eigenvalues of \eqref{eq:rationalT} is $\hat{\delta} = \begin{pmatrix} n +m \\ m+1
\end{pmatrix}$. Although $\delta < \hat{\delta}$, we have $n \cdot \delta > \hat{\delta}$. We will illustrate this with an example in Section \ref{sec:6}. We leave the question whether and when our method is advantageous for solving this type of REPv as a topic for future research.

\section{Numerical experiments} \label{sec:6}

In this section we present several numerical examples illustrating the results presented above. Our algorithm has two important parameters that impact the numerical performance: the number $N+1$ of discretization points on the contour to evaluate the integral, and the number of moment matrices $2M$.
In the experiments below, we will investigate the influence of these parameters on the accuracy. We assess the quality of an approximate eigenpair $(x^*,z^*)$ by its residual $r^* = \lVert T(x^*,z^*)\cdot x^* \rVert/\lVert x^*\rVert$.
The presented result are generated by an implementation in Julia (v1.6) using \texttt{HomotopyContinuation.jl} (v2.6.4) \cite{breiding2018homotopy}.
The source code is available online to reproduce all results\footnote{\texttt{github.com/robclaes/contour-integration}}.

\subsection{Experiment 1}

Consider the PEPv $T(x,z)\cdot x=0$ where $T(x,z)$ has size $3\times 3$ and each row is of degree $d = 2$ in $x$ and $e = 4$ in $z$.
The coefficients are randomly generated in order to obtain a generic system.
The contour enclosing the target domain $\Omega$ is shown in Figure~\ref{fig:exp1} together with the exact eigenvalues in the neighborhood of $\Omega$.

The impact of the number of discretization points $N+1$ is the most intuitive: the more points, the higher the accuracy of the detected eigenvalues in $\Omega$.
There is a less intuitive impact that has been observed in contour integration for nonlinear eigenvalue problems \cite{van2016nonlinear}.
When the contour integral is approximated with a low number of points, it is possible that eigenvalues outside the contour are detected.
Evaluating the contour integral with $1000$ points detects only the four eigenvalues in $\Omega$ with average residual in the order of magnitude of machine precision.
However, evaluating the contour integral with $100$ points, detects 14 eigenvalues depicted in Figure~\ref{fig:exp1}: four eigenvalues in $\Omega$ with average residual of $\approx 10^{-11}$ and eight eigenvalues outside the target domain with residual varying from $10^{-9}$ to $10^{-5}$ depending on the distance from the contour. This phenomenon is best explained via the relation between numerical integration and filter functions on $\C$, see \cite{van2016nonlinear} for details.

An obvious impact of the number of moment matrices can be seen in \eqref{eq:Bi}: the maximum number of eigenvalues that can be detected is $Mn$.
Therefore $M$ should be large enough to detect at least the expected number of eigenvalues in $\Omega$.
However when a low number of discretization points is chosen, extra care must be taken when choosing the number of moment matrices: the algorithm will detect additional eigenvalues outside $\Omega$ which may lead to more eigenvalues than the number of eigenvalues that can be detected for a given $M$. 
For the specific instance here, we selected $M=9$ which leads to a maximum of $Mn=27$ detectable eigenvalues.
In the case with $100$ discretization points this upper bound is large enough to detect the 14 eigenvalues.
When we set $M=2$ -- which should suffice for the expected $4$ eigenvalues in $\Omega$ -- with $100$ discretization points, the eigenvalues outside $\Omega$ perturb the result leading the an average residual of the 4 eigenvalues in $\Omega$ of $10^{-3}$.

Since the degree of the polynomials is the same for each row, we select the polynomials $a_i$ in accordance with Theorem~\ref{thm:pyramid}, i.e., $a_i$ is a monomial in $x$ of degree $d=2$.
By Proposition \ref{prop:rat2}, this leads to $n\cdot\delta =n\cdot(d+1)^{n-1}=27$ tracked paths, which is smaller than the expected number of tracked paths when using random polynomials: $n\cdot\delta=n\cdot\left((d+1)^n-d^n\right)=57$.

\begin{figure}
    \centering
    \input{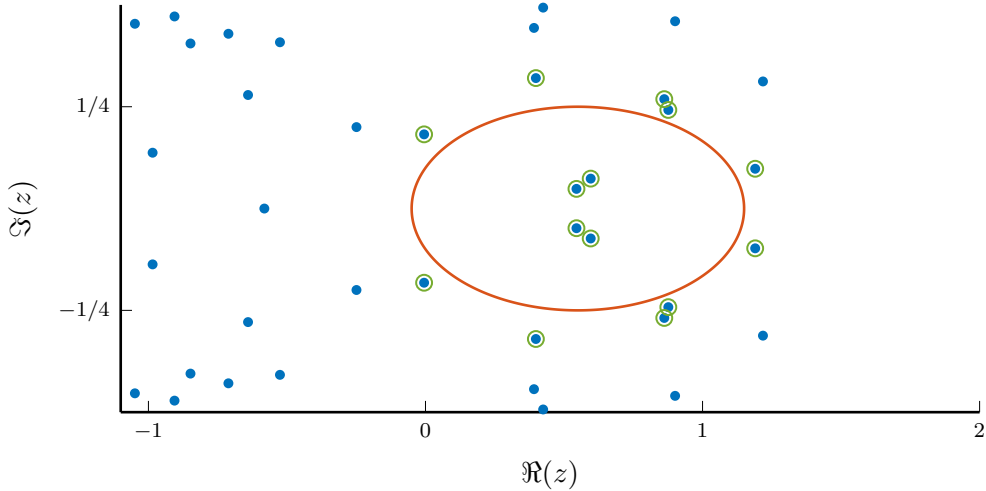}
    \caption{Eigenvalues (\ref{marker:eigs}) inside the target domain defined by the contour (\ref{marker:contour}) and the extracted values by contour integration (\ref{marker:CI}) for experiment 1.}
    \label{fig:exp1}
\end{figure}

\subsection{Experiment 2}
Consider the PEPv $T(x,z)\cdot x=0$ where $T(x,z)$ has size $10\times 10$ and each row is of degree $d = 1$ in $x$ and $e = 5$ in $z$.
The coefficients are randomly generated.
The contour enclosing the target domain $\Omega$ is shown in Figure~\ref{fig:exp2} together with the exact eigenvalues in the neighborhood of $\Omega$.
This is a very nontrivial problem since the total number of solutions of the PEPv equals $\hat{\delta}=25600$ and they are almost all clustered around the origin of the complex plane.
The selected contour is a circle with center at the origin and a radius of $0.1$ which encircles $44$ eigenvalues of the problem.

Since the neighborhood of the target region $\Omega$ is densely scattered with eigenvalues, we select a relatively high number of integration points $N+1=400$ to increase the sharpness of the integration filter as discussed in the previous example.
Given the high number of integration points, a maximum of $2M=10$ moment matrices should suffice to capture the $44$ expected eigenvalues in $\Omega$.
The result is shown in Figure~\ref{fig:exp2}: a total of $46$ detected eigenvalues: $44$ inside $\Omega$ and $2$ just outside the target region.
The residual for the extracted eigenpairs varies from $10^{-4}$ to $10^{-8}$.
In accordance with Theorem~\ref{thm:pyramid}, we selected $a_i$ as a monomial of degree $d=1$ which leads to $n\cdot\delta=n\cdot(d+1)^{n-1}=5120$ tracked paths.
Therefore, finding all solutions with standard homotopy continuation takes roughly 2390 seconds to compute, while our approach with $400$ interpolation points takes 1120 seconds.
(Both timings result from a single-thread implementation in Julia).

\begin{figure}
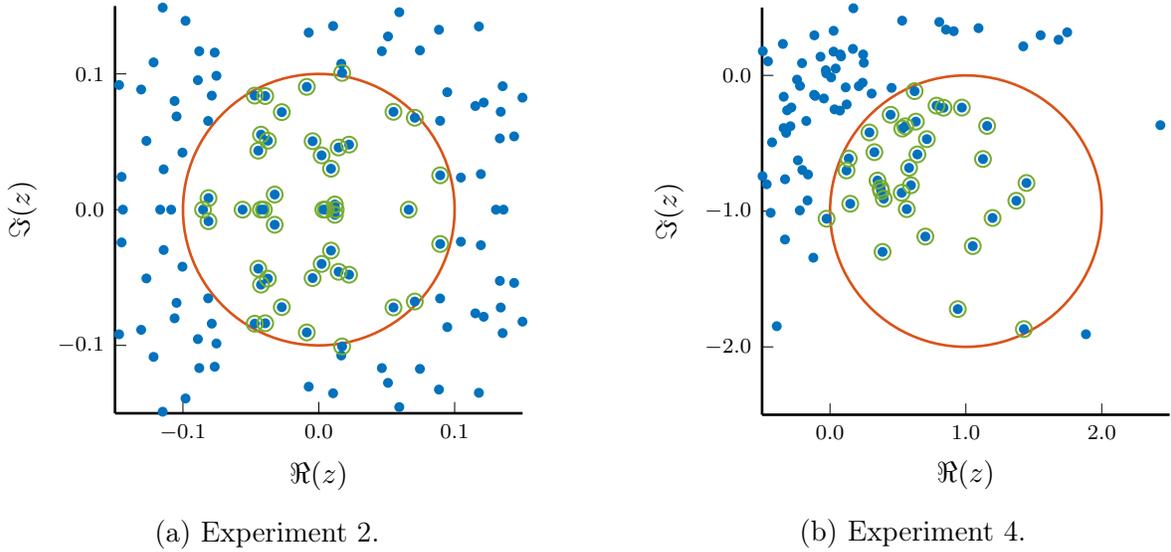

    \centering
    \begin{subfigure}{0.48\textwidth}
        \centering
        \input{experiment2}
        \caption{Experiment 2.}
        \label{fig:exp2}
    \end{subfigure}
    \hfill
    \begin{subfigure}{0.48\textwidth}
        \centering
        \input{experiment4}
        \caption{Experiment 4.}
        \label{fig:exp4}
    \end{subfigure}
    \caption{Eigenvalues (\ref{marker:eigs}) inside the target domain defined by the contour (\ref{marker:contour}) and the extracted values by contour integration (\ref{marker:CI}).}
    \label{fig:my_label}
\end{figure}

\subsection{Experiment 3}
\label{exp:3}
Consider the system of equations $T(x,z)\cdot x=0$ given by
\begin{equation}
    T(x,z) = \begin{pmatrix}
    x_1^2x_2 & -2\sqrt{-1}x_1^2x_2\cos(z)\\
    -x_2^2\cos(z^2) & 2x_2^2\sin(3z)
    \end{pmatrix}.
\end{equation}
Note that this system is not polynomial in $z$, but in practice the system is solved by an implicit substitution of Maclaurin series of high order for the sine and cosine functions. 
This approach leads to a PEPv that is of high degree in $z$.
We expect an infinite number of solutions since the trigonometric functions can be expressed by their Maclaurin series in $z$.
We use $100$ discretization points for the contour, and $2M = 16$ moment matrices.
The $a_i$ are selected as random monomials in $x$ that have the same degree as the polynomials in the corresponding row of $T(x,z)$, similarly as in Theorem~\ref{thm:pyramid}.
This leads to 4 tracked paths, instead of 10 for random polynomials.
Figure~\ref{fig:exp3_conv} shows the impact of the number of discretization points on the residual of the 11 extracted solutions.
As stated in experiment 1, increasing the number of discretization points leads to a decrease in the residual.

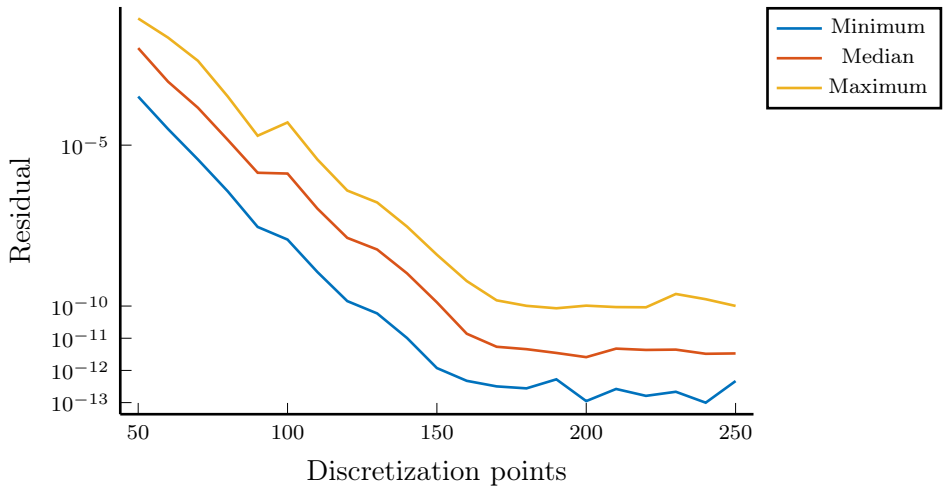
\begin{figure}
    \centering
    \begin{tikzpicture}[/tikz/background rectangle/.style={fill={rgb,1:red,1.0;green,1.0;blue,1.0}, draw opacity={1.0}}]
\begin{axis}[point meta max={nan}, point meta min={nan}, legend cell align={left}, legend columns={1}, title={}, title style={at={{(0.5,1)}}, anchor={south}, font={{\fontsize{14 pt}{18.2 pt}\selectfont}}, color={rgb,1:red,0.0;green,0.0;blue,0.0}, draw opacity={1.0}, rotate={0.0}}, legend style={color={rgb,1:red,0.0;green,0.0;blue,0.0}, draw opacity={1.0}, line width={1}, solid, fill={rgb,1:red,1.0;green,1.0;blue,1.0}, fill opacity={1.0}, text opacity={1.0}, font={{\fontsize{8 pt}{10.4 pt}\selectfont}}, text={rgb,1:red,0.0;green,0.0;blue,0.0}, cells={anchor={center}}, at={(1.02, 1)}, anchor={north west}}, axis background/.style={fill={rgb,1:red,1.0;green,1.0;blue,1.0}, opacity={1.0}}, anchor={north west}, xshift={1.0mm}, yshift={-1.0mm}, width={100mm}, height={70mm}, scaled x ticks={false}, xlabel={Discretization points}, x tick style={color={rgb,1:red,0.0;green,0.0;blue,0.0}, opacity={1.0}}, x tick label style={color={rgb,1:red,0.0;green,0.0;blue,0.0}, opacity={1.0}, rotate={0}}, xlabel style={at={(ticklabel cs:0.5)}, anchor=near ticklabel, at={{(ticklabel cs:0.5)}}, anchor={near ticklabel}, font={{\fontsize{11 pt}{14.3 pt}\selectfont}}, color={rgb,1:red,0.0;green,0.0;blue,0.0}, draw opacity={1.0}, rotate={0.0}}, xmin={44.0}, xmax={256.0}, xtick={{50.0,100.0,150.0,200.0,250.0}}, xticklabels={{$50$,$100$,$150$,$200$,$250$}}, xtick align={inside}, xticklabel style={font={{\fontsize{8 pt}{10.4 pt}\selectfont}}, color={rgb,1:red,0.0;green,0.0;blue,0.0}, draw opacity={1.0}, rotate={0.0}}, x grid style={color={rgb,1:red,0.0;green,0.0;blue,0.0}, draw opacity={0.1}, line width={0.5}, solid}, axis x line*={left}, x axis line style={color={rgb,1:red,0.0;green,0.0;blue,0.0}, draw opacity={1.0}, line width={1}, solid}, scaled y ticks={false}, ylabel={Residual}, y tick style={color={rgb,1:red,0.0;green,0.0;blue,0.0}, opacity={1.0}}, y tick label style={color={rgb,1:red,0.0;green,0.0;blue,0.0}, opacity={1.0}, rotate={0}}, ylabel style={at={(ticklabel cs:0.5)}, anchor=near ticklabel, at={{(ticklabel cs:0.5)}}, anchor={near ticklabel}, font={{\fontsize{11 pt}{14.3 pt}\selectfont}}, color={rgb,1:red,0.0;green,0.0;blue,0.0}, draw opacity={1.0}, rotate={0.0}}, ymode={log}, log basis y={10}, ymin={4.3581874493397274e-14}, ymax={0.19669137868571174}, ytick={{1.0e-5,1.0e-10,1.0e-11,1.0e-12,1.0e-13}}, yticklabels={{$10^{-5}$,$10^{-10}$,$10^{-11}$,$10^{-12}$,$10^{-13}$}}, ytick align={inside}, yticklabel style={font={{\fontsize{8 pt}{10.4 pt}\selectfont}}, color={rgb,1:red,0.0;green,0.0;blue,0.0}, draw opacity={1.0}, rotate={0.0}}, y grid style={color={rgb,1:red,0.0;green,0.0;blue,0.0}, draw opacity={0.1}, line width={0.5}, solid}, axis y line*={left}, y axis line style={color={rgb,1:red,0.0;green,0.0;blue,0.0}, draw opacity={1.0}, line width={1}, solid}, colorbar={false}]
    \addplot[color=mycolor1,  draw opacity={1.0}, line width={1}, solid]
        table[row sep={\\}]
        {
            \\
            50.0  0.00032297556867893134  \\
            60.0  3.165767621366915e-5  \\
            70.0  3.5799428139726276e-6  \\
            80.0  3.6561079633110145e-7  \\
            90.0  2.8724664485481023e-8  \\
            100.0  1.1627380884341356e-8  \\
            110.0  1.1424158897092236e-9  \\
            120.0  1.4253430229282738e-10  \\
            130.0  5.90786517022531e-11  \\
            140.0  1.0174503599032456e-11  \\
            150.0  1.184195974893719e-12  \\
            160.0  4.74784027858908e-13  \\
            170.0  3.1907982724276905e-13  \\
            180.0  2.77159094300411e-13  \\
            190.0  5.285471971650926e-13  \\
            200.0  1.1144981208101681e-13  \\
            210.0  2.6527106649798715e-13  \\
            220.0  1.6207025163484862e-13  \\
            230.0  2.170571980076027e-13  \\
            240.0  9.941491593564362e-14  \\
            250.0  4.666300013500311e-13  \\
        }
        ;
    \addlegendentry {Minimum}
    \addplot[color=mycolor2,draw opacity={1.0}, line width={1}, solid]
        table[row sep={\\}]
        {
            \\
            50.0  0.010356128768253068  \\
            60.0  0.000938409050477471  \\
            70.0  0.00014512139700793267  \\
            80.0  1.455329848457706e-5  \\
            90.0  1.385793605553641e-6  \\
            100.0  1.3094984125726255e-6  \\
            110.0  1.0675802955264307e-7  \\
            120.0  1.3145436385831498e-8  \\
            130.0  5.7141204654576766e-9  \\
            140.0  1.0384263609228616e-9  \\
            150.0  1.304610385622612e-10  \\
            160.0  1.380591839215439e-11  \\
            170.0  5.4320789078795914e-12  \\
            180.0  4.5724007301143065e-12  \\
            190.0  3.490898810968369e-12  \\
            200.0  2.585484738003879e-12  \\
            210.0  4.756769373057153e-12  \\
            220.0  4.331855437300488e-12  \\
            230.0  4.4294070049551e-12  \\
            240.0  3.2872505921898884e-12  \\
            250.0  3.3715501519623125e-12  \\
        }
        ;
    \addlegendentry {Median}
    \addplot[color=mycolor3,  draw opacity={1.0}, line width={1}, solid]
        table[row sep={\\}]
        {
            \\
            50.0  0.08622628605714612  \\
            60.0  0.021917403372299683  \\
            70.0  0.0041724977108685835  \\
            80.0  0.00032352652909829115  \\
            90.0  1.9612073043114425e-5  \\
            100.0  5.069184925829584e-5  \\
            110.0  3.586445287826807e-6  \\
            120.0  3.8820371462841895e-7  \\
            130.0  1.655688903194703e-7  \\
            140.0  2.9440433679053535e-8  \\
            150.0  3.935847861486385e-9  \\
            160.0  5.982899533133101e-10  \\
            170.0  1.5058813163731429e-10  \\
            180.0  1.0166747056029275e-10  \\
            190.0  8.526763291507809e-11  \\
            200.0  1.0293699726750942e-10  \\
            210.0  9.288689513868011e-11  \\
            220.0  9.1510279521014e-11  \\
            230.0  2.373574299500802e-10  \\
            240.0  1.63135765754764e-10  \\
            250.0  1.0082795853451223e-10  \\
        }
        ;
    \addlegendentry {Maximum}
\end{axis}
\end{tikzpicture}
    \caption{Impact of number of discretization points on residuals for experiment~3.}
    \label{fig:exp3_conv}
\end{figure}

\subsection{Experiment 4}
Consider the REPv \eqref{eq:rationalT} of dimension $n=10$ with $m=2$ rational terms where all coefficients are randomly generated.
A problem with these dimensions is expected to have $\hat{\delta} =\begin{pmatrix} n +m \\ m+1
\end{pmatrix}=220$ eigenvalues.
According to Theorem~\ref{thm:nosolsrat} we need to track $n\cdot\delta=550$ paths.
As depicted in Figure~\ref{fig:exp4}, all $33$ eigenvalues in the contour are detected with a residual ranging from $10^{-8}$ to $10^{-12}$, and one eigenvalue outside of the contour with a residual of $10^{-6}$.
This result is obtained using $N+1=400$ nodes, and $2M=10$ moment matrices.

\section{Conclusions}
We presented a new contour integration method for solving polynomial eigenvalue problems with eigenvector nonlinearities and developed its first theoretical foundations.
The eigenvalues are the roots of a resultant polynomial.
We showed that, under suitable assumptions, this polynomial equals the denominator of the trace obtained by summing over the solutions to a modified system of equations. This can be evaluated along a contour using numerical homotopy continuation techniques. 
This way, we can extract eigenvalues in a compact domain and their corresponding eigenvectors by numerical contour integration.
We derived the number of homotopy continuation paths that need to be tracked for two classes of problems.
This governs, to a certain extent, the complexity of our method. However, a direct comparison with the total number of eigenvalues is not very meaningful since the difficulty and computational cost of tracking a single path may differ greatly.
A comparative study on the total computational cost is an interesting topic for future research, together with a study on the applicability of other NEP methods on the compound trace matrix $U(z)$.

\section*{Acknowledgements} 
The work by Rob Claes and Karl Meerbergen is supported by the Research Foundation Flanders (FWO) Grant G0B7818N and the KU Leuven Research Council.

We would like to thank Paul Breiding for his help with \texttt{HomotopyContinuation.jl}, and Carlos D'Andrea for insightful discussions. 

\bibliographystyle{abbrv}
\bibliography{references.bib} 
\noindent
\footnotesize 
{\bf Authors' addresses:}

\smallskip

\noindent Rob Claes, KU Leuven
\hfill {\tt rob.claes@kuleuven.be}

\noindent Karl Meerbergen, KU Leuven
\hfill {\tt karl.meerbergen@kuleuven.be}

\noindent Simon Telen, MPI-MiS Leipzig
\hfill {\tt simon.telen@mis.mpg.de}

\end{document}